\newcommand{\calC}{\ensuremath{\mathcal C}}
\newcommand{\calE}{\mathcal{E}} 
\newcommand{\calF}{\mathcal{F}}
\newcommand{\calS}{\ensuremath{\mathcal S}}
\newcommand{\Un}{\mathrm{Dec}}
\theoremstyle{plain}
\newtheorem{theorem}{Theorem}[section]
\newtheorem{corollary}[theorem]{Corollary}
\newtheorem{proposition}[theorem]{Proposition}
\newtheorem{lemma}[theorem]{Lemma}
\theoremstyle{definition}
\newtheorem{remark}[theorem]{Remark}
\begin{document}

\author{M. Menni\footnote{Conicet and Universidad Nacional de La Plata, Argentina.}}

%\address{Conicet and Universidad Nacional de La Plata, Argentina.}

%\email{matias.menni@gmail.com}

\title{The Unity and Identity of \\ decidable objects and double-negation sheaves}

\maketitle

\begin{abstract}
Let $\calE$ be a topos,  ${\Un(\calE) \rightarrow \calE}$ be the full subcategory of decidable objects, and ${\calE_{\neg\neg}\rightarrow \calE}$ be the full subcategory of double-negation sheaves. We give sufficient conditions for the existence of a Unity and Identity ${\calE \rightarrow \calS}$ for the two subcategories of $\calE$ above, making them Adjointly Opposite. Typical examples of such $\calE$ include many `gros' toposes in Algebraic Geometry, simplicial sets and other toposes of `combinatorial' spaces in Algebraic Topology, and certain models of Synthetic Differential Geometry.
\end{abstract}

%\tableofcontents 

\section{Introduction}

The preface {\em Contributors to Sets for Mathematics} in \cite{LawvereRosebrugh} starts by stating that the ``book began as the transcript of a 1985 course at SUNY Buffalo and still retains traces of that verbal record". This is the same course that Lawvere mentions in the beginning of \cite{Lawvere94}; notice the acknowledgments to Myhill for making the course possible and for his incisive comment about Cantor's {\em lauter Einsen}.
Some of the ideas in the 1985 course were also published in \cite{Lawvere03} whose influence may be seen in more recent work on Axiomatic Cohesion \cite{LawvereMenni2015}. 
The present paper is another step in the line of work indicated above, providing further evidence of the soundness of Lawvere's interpretation of Cantor's work.

Let us recall from \cite{Lawvere96a} that 
``A {\em Unity and Identity} (UI) of two maps with a common codomain $C$ is a third map with domain $C$ which composes with both to give isomorphisms. The existence of such a third map obviously implies that the two maps are subobject-inclusions, that these two inclusions have isomorphic domains, and that $C$ is retracted onto both of these subobjects, but moreover that there is a common retraction in the following sense: any UI in any category is canonically isomorphic to one in which both composite isomorphisms are actually the identity map. In this view, a UI is just a map equipped with two sections, or equivalently, is a common retraction for two subobjects whose underlying objects are identical".
We recall also from that paper that: ``In a 2-category, two parallel maps may be called {\em adjointly opposite} (AO) if there is a single third map which is right adjoint to one of the given pair and left adjoint to the other. [...] In case the third map is {\em also} a UI for the adjointly opposites, then the AO are of course both full inclusions. Such a map, having both left and right adjoints which are moreover full inclusions, is a UIAO (unity and identity of
adjointly opposites), also known as an `essential localization'".

In \cite{Lawvere94}, published in 1994, Lawvere suggests that ``when studying the works of the great mathematicians of the last century we must strive afresh to find the core content of their thought, without being prejudiced by the opinions of the editors of their collected works, and others during the period after the last decade of the century". Moreover, he puts this in practice in an analysis of the work of Cantor which clarifies the distinction between {\em Mengen} and {\em Kardinalen}. In particular, he proposes to study  the ``general situation in which we are given an arbitrary category $\mathbf{M}$ of {\em Mengen}, itself containing two opposed subcategories of discrete and codiscrete objects, each essentially identical with a category $\mathbf{K}$ of {\em Kardinalen}". In other words, he proposes to study UIAOs ${\mathbf{M} \rightarrow \mathbf{K}}$ following the intuition that $\mathbf{M}$ is a `category of spaces',  that $\mathbf{K}$ is a category of `abstract sets' and that the UI ${\mathbf{M} \rightarrow \mathbf{K}}$ assigns, to the each space, the corresponding set of points. Also intuitively, the left and right adjoints ${\mathbf{K} \rightarrow \mathbf{M}}$ send a set to the associated discrete and codiscrete space respectively.

In \cite{Lawvere03}, it is observed that ``often the needed categories of spaces are self-founded in the sense that within them a subcategory playing the role of abstract structureless sets can be defined [...]". 
One general explanation of this observation is obtained in \cite{LawvereMenni2015} where it is proved that, for any cohesive geometric morphism ${p : \calE \rightarrow \calS}$ satisfying Stable Connected Codiscreteness, the subtopos ${p_*  \dashv p^! : \calS \rightarrow \calE}$ coincides with the subtopos ${\calE_{\neg\neg} \rightarrow \calE}$ of sheaves for the $\neg\neg$-topology in $\calE$. In other words, for such $p$, the codiscrete spaces may be defined as the sheaves for the $\neg\neg$-topology. As mentioned after Corollary~{9.4} loc.~cit., one  interpretation of that result involves the conclusion that the real contrast between {\em Mengen} and {\em Kardinalen} emerges from the case of a topos whose double-negation part has additional remarkable properties; in particular, the property that the subtopos of $\neg\neg$-sheaves is essential.
It is natural to wonder if, at least in the same context of a cohesive topos ${p : \calE \rightarrow \calS}$, the hyperconnected ${p^* \dashv p_* : \calE \rightarrow \calS}$ can also be defined without reference to $p$.

\begin{remark} Let us emphasize that ``the needed categories of spaces" mentioned above include many categories in standard mathematical practice, such as `gros' toposes in algebraic geometry \cite{MenniExercise}, simplicial sets and other toposes of `combinatorial' spaces \cite{MarmolejoMenni2017}, certain well-adapted models of synthetic differential geometry \cite{KockSDG2ed}, as well as the cohesive Grothendieck toposes introduced in \cite{Menni2014a}. It is not a coincidence that these are models of the axioms for Cohesion introduced in \cite{Lawvere07}.
\end{remark}

Recall that an object $X$ in an extensive category $\calE$ with finite limits is {\em decidable} if the diagonal ${\Delta : X \rightarrow X\times X}$ is complemented. For example, in a Boolean topos, every object is decidable. The full subcategory of decidable objects will be denoted by ${\Un(\calE) \rightarrow \calE}$.
The category ${\Un(\calE)}$ is extensive and has finite limits. The inclusion ${\Un(\calE) \rightarrow \calE}$ preserves finite coproducts and finite limits and it is closed under subobjects. (See, for example, \cite{Carboni96}.)

Also motivated by the work of Cantor, McLarty proposes in \cite{McLarty87} (see also \cite{McLarty88}) to consider 2-valued toposes $\calE$ with global support and such that every object $X$ in $\calE$ has a unique subobject ${\beta_X : C X \rightarrow  X}$ with ${C X}$ decidable and every global element of $X$ factoring through ${\beta_X}$. It follows easily that, under these hypotheses, the inclusion ${\Un(\calE) \rightarrow \calE}$ has a right adjoint but, moreover,  the right adjoint ${\calE \rightarrow \Un(\calE)}$ is a UIAO for  ${\Un(\calE) \rightarrow \calE}$ and ${\calE_{\neg\neg} \rightarrow \calE}$.

Our purpose is to show that the conclusion of McLarty's result holds for any stably pre-cohesive topos ${p : \calE \rightarrow \calS}$  (in the sense of \cite{LawvereMenni2015}) with Boolean codomain.  In more detail, let $\calE$ be a topos and consider the following statements.

\begin{enumerate}
\item The topos $\calS$ is Boolean and  ${p : \calE \rightarrow \calS}$ is a stably pre-cohesive geometric morphism. 
\item The subcategories  ${\Un(\calE) \rightarrow \calE}$ and ${\calE_{\neg\neg} \rightarrow \calE}$ are the left and right inclusions of a UIAO.
\item The inclusion ${\Un(\calE) \rightarrow \calE}$ has a right adjoint that reflects initial object.
\item The inclusion ${\calE_{\neg\neg} \rightarrow \calE}$ of $\neg\neg$ sheaves is the right inclusion of a UIAO.
\end{enumerate}

By Proposition~{4.4} in \cite{LawvereMenni2015}, the first item implies that the subtopos ${p_* \dashv p^! : \calS \rightarrow \calE}$ coincides with ${\calE_{\neg\neg} \rightarrow \calE}$ and so, that item implies the fourth.
Trivially, the second item also implies the fourth.
We prove here that the second and third items are equivalent, and that they are both implied by the first.

\begin{remark} It seems worth observing that the consideration of decidable objects is also applicable to sites. Indeed, several of the examples of sites $\calC$ for (pre-)cohesive toposes have the feature that the subcategory ${\Un(\calC) \rightarrow \calC}$ has additional remarkable properties. In particular, it has a finite-product preserving left adjoint. For example, finite posets and (the opposite of) $K$-algebras for nice fields $K$ \cite{Lawvere08, MenniExercise}.
\end{remark}

Assume now that $\calE$ and $\calS$ are toposes and let ${p : \calE \rightarrow \calS}$ be a geometric morphism with unit $\alpha$ and counit $\beta$.
Recall that $p$ is said to be {\em connected} if ${p^* : \calS \rightarrow \calE}$ is full and faithful. Recall also that the geometric morphism $p$ is {\em hyperconnected} if and only if it is connected and the counit $\beta$ is monic.

It is well-known that each object $B$ in $\calS$ determines a geometric morphism ${p/B : \calE/p^* B \rightarrow \calS/B}$ whose inverse image ${(p/B)^*}$ is simply $p^*$ applied to morphisms in $\calS$ with codomain $B$. See Example~{A4.1.3} in \cite{elephant}.
If  ${p^*}$ is fully faithful so is ${(p/B)^*}$ and, in this case,  ${(p/B)_* : \calE/p^* B \rightarrow \calS/B}$ sends ${x : X \rightarrow p^* B}$ to the composite
$$\xymatrix{
p_* X \ar[r]^-{p_* x} & p_* (p^* B) \ar[r]^-{\alpha^{-1}} & B
}$$
as an object in ${\calS/B}$. Notice that $\alpha$ is an isomorphism because $p$ is connected. In other words, if ${p : \calE \rightarrow \calS}$ is connected then so is ${p/B : \calE/p^* B \rightarrow \calS/B}$. See Lemma~{5.1} in \cite{LawvereMenni2015}.

\begin{lemma}\label{LemHyperconnectedAreStable} If the geometric  ${p : \calE \rightarrow \calS}$ is hyperconnected then so is  ${p/B : \calE/p^* B \rightarrow \calS/B}$ for every object $B$ in $\calS$.
\end{lemma}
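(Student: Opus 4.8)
The plan is to reduce the statement to a single claim about the counit. Recall from the discussion above that hyperconnectedness is the conjunction of connectedness and monicity of the counit, and that $p/B$ is already known to be connected whenever $p$ is (this is recalled in the paragraph preceding the statement; see Lemma~5.1 in \cite{LawvereMenni2015}). So it suffices to show that the counit of $p/B$ is a monomorphism.

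The first and main step is to compute that counit. Fix an object $x : X \to p^*B$ of $\calE/p^*B$. Using the formula for $(p/B)_*$ recalled above, $(p/B)_*(x)$ is the object $\alpha^{-1}\circ p_*x : p_*X \to B$, so $(p/B)^*(p/B)_*(x)$ is $p^*(\alpha^{-1}\circ p_*x) : p^*(p_*X) \to p^*B$. I claim that the counit component at $x$ is simply the counit $\beta_X : p^*(p_*X)\to X$ of $p$, regarded as a morphism of $\calE/p^*B$. To see that $\beta_X$ is indeed a morphism over $p^*B$, combine naturality of $\beta$ with the triangle identity $\beta_{p^*B}\circ p^*\alpha_B = \mathrm{id}_{p^*B}$ --- recalling that $\alpha_B$ is invertible since $p$ is connected, so $\beta_{p^*B} = p^*(\alpha^{-1}_B)$ --- to obtain $x\circ\beta_X = \beta_{p^*B}\circ p^*(p_*x) = p^*(\alpha^{-1}_B\circ p_*x)$, which is exactly the structure map of $(p/B)^*(p/B)_*(x)$. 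That $\beta_X$ is not merely a map of the right type but really is the counit then follows by transporting the adjunction $p^* \dashv p_*$ to the slices: the transpose across $p^*\dashv p_*$ of any $h : (p/B)^*(c)\to x$ (for $c : C \to B$ in $\calS/B$) is automatically a morphism over $B$, because $p^*$ is faithful, and it is the unique one inducing $h$; alternatively, one may simply quote the standard computation of the sliced geometric morphism in Example~A4.1.3 of \cite{elephant}.

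Granting this identification, the conclusion is immediate: since $p$ is hyperconnected, $\beta$ is monic, hence $\beta_X$ is monic in $\calE$, hence monic in $\calE/p^*B$ (the domain functor reflects monomorphisms); so the counit of $p/B$ is monic and, $p/B$ being connected as well, it is hyperconnected. I expect the only delicate point to be the bookkeeping in the middle step --- verifying that $\beta_X$ sits in the slice and is genuinely the counit, rather than just a morphism with the correct source and target --- but this is routine once the triangle identities are written out, and can be offloaded onto the reference already cited for the construction of $p/B$.
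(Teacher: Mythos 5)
Your proposal is correct and follows essentially the same route as the paper: connectedness of $p/B$ is taken from the preceding discussion, the counit of ${(p/B)^* \dashv (p/B)_*}$ at ${x : X \rightarrow p^* B}$ is identified with ${\beta_X}$ viewed as a map over ${p^* B}$, and monicity then transfers from $\calE$ to the slice. Your explicit verification via naturality of $\beta$ and the triangle identity just fills in what the paper dismisses as ``easy to check''.
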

\begin{proof}
By hypothesis, $p^*$ is fully faithful and the counit $\beta$ of ${p^* \dashv p_*}$ is monic. We already know that ${(p/B)^*}$ is fully faithful so we need only check that the counit of $p/B$ is monic.
Using the description above, it is easy to check that  the counit of ${(p/B)^* \dashv (p/B)_*}$ is the top map in the following diagram
$$\xymatrix{
p^* (p_* X) \ar[d]_-{p^*(p_* x)} \ar[rr]^-{\beta} & & X \ar[d]^-x \\ 
p^*(p_* (p^* B)) \ar[rr]_-{p^*(\alpha^{-1})} & & p^* B
}$$
thought of as a map  ${(p/B)^* ((p/B)_* x) \rightarrow x}$ in ${\calE/ p^* B}$.
\end{proof}

 If the map ${p : \calE \rightarrow \calS}$ is hyperconnected, the inclusions ${p^* : \calS \rightarrow \calE}$  and ${\Un(\calE) \rightarrow \calE}$ share the following important properties: preservation of finite limits and finite coproducts, and closure under subobjects.

\begin{lemma}\label{LemHyperConnectedIffQuotients} If the inclusion ${\Un(\calE) \rightarrow \calE}$ has a right adjoint then $\Un(\calE)$ is a topos and the coreflection is the direct image of a hyperconnected geometric morphism ${\calE \rightarrow \Un(\calE)}$. 
\end{lemma}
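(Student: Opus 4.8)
The plan has three stages: show that $\Un(\calE)$ is a topos, recognise the coreflection as the direct image of a geometric morphism $q\colon\calE\to\Un(\calE)$, and verify that $q$ is hyperconnected. Throughout write $i\colon\Un(\calE)\to\calE$ for the inclusion and $c\colon\calE\to\Un(\calE)$ for its right adjoint, with unit $\eta\colon\mathrm{id}\to ci$ (an isomorphism, since $i$ is fully faithful) and counit $\epsilon\colon ic\to\mathrm{id}_\calE$. Recall that $i$ preserves finite limits and finite coproducts and that $\Un(\calE)$ is closed under finite limits and under subobjects in $\calE$.

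For the first stage I would equip $\Un(\calE)$ with a topos structure transported from $\calE$ along $i\dashv c$. Finite limits are computed as in $\calE$. I claim $c\Omega_\calE$ is a subobject classifier: for $A$ in $\Un(\calE)$,
\[ \Un(\calE)(A,c\Omega_\calE)\;\cong\;\calE(iA,\Omega_\calE)\;\cong\;\mathrm{Sub}_\calE(iA)\;\cong\;\mathrm{Sub}_{\Un(\calE)}(A), \]
where the last step uses that $\Un(\calE)$ is full in $\calE$ and closed under subobjects; this is natural in $A$, so $\mathrm{Sub}_{\Un(\calE)}$ is represented by $c\Omega_\calE$. Similarly $c\,P_\calE(iA)$ is a power object of $A$, since
\[ \Un(\calE)(B,c\,P_\calE(iA))\;\cong\;\calE(iB,P_\calE(iA))\;\cong\;\mathrm{Sub}_\calE(iB\times iA)\;\cong\;\mathrm{Sub}_{\Un(\calE)}(B\times A), \]
naturally in $B$, using that $i$ preserves products. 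A category with finite limits and power objects is a topos. (Alternatively: $ic$ is a left-exact, indeed idempotent, comonad on $\calE$ — left-exact because $i$ preserves finite limits and $c$, being a right adjoint, preserves all limits — so $\Un(\calE)$ is its category of coalgebras, and one may quote the standard fact that such a category is a topos.)

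Granting that, $i$ is a finite-limit-preserving functor between toposes with a right adjoint, so the adjunction $i\dashv c$ is the inverse/direct image pair of a geometric morphism $q\colon\calE\to\Un(\calE)$ with $q^*=i$ and $q_*=c$ the given coreflection; and $q$ is connected since $q^*=i$ is fully faithful. It remains to show the counit $\beta=\epsilon$ of $q$ is monic. Fix $X$ in $\calE$ and put $Z=q^*q_*X=icX$; let $K$, with projections $k_1,k_2\colon K\to Z$, be the kernel pair of $\epsilon_X\colon Z\to X$, so that $\langle k_1,k_2\rangle\colon K\to Z\times Z$ is monic and $\epsilon_X k_1=\epsilon_X k_2$. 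Now $Z\times Z\cong q^*(q_*X\times q_*X)$ is decidable, hence so is its subobject $K$; thus $K$ is an object of $\Un(\calE)$, say $K\cong iK'$. By full faithfulness of $i$ both $k_1$ and $k_2$ have the form $i$ applied to a map $K'\to q_*X$, and both of these maps factor the common composite $\epsilon_X k_1=\epsilon_X k_2\colon iK'\to X$ through $\epsilon_X$; by the uniqueness clause in the couniversal property of $\epsilon_X$ these two maps agree, whence $k_1=k_2$. A morphism whose kernel pair has equal projections is monic, so $\epsilon_X$ is monic and $q$ is hyperconnected.

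The only non-formal step, and the crux, is the last one: the observation that the kernel pair of the counit $\epsilon_X$ embeds into the decidable object $icX\times icX$ and is therefore itself decidable, hence an object of $\Un(\calE)$, which is exactly what makes the couniversal property of $\epsilon_X$ applicable and forces $\epsilon_X$ to be monic. Everything else is bookkeeping with the adjunction $i\dashv c$, the topos structure of $\calE$, and the standing facts that decidable objects are closed under finite limits and subobjects.
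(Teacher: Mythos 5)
Your proof is correct, and it differs from the paper's mainly in being self-contained where the paper cites standard results. The paper disposes of the topos claim in one line by observing that $\Un(\calE)$ is the category of coalgebras for the lex idempotent comonad $ic$ (the alternative you mention parenthetically), whereas you build the topos structure by hand, exhibiting $c\Omega_\calE$ as subobject classifier and $c\,P_\calE(iA)$ as power objects via closure under subobjects; both are fine, the comonad route being shorter and your route more explicit. For hyperconnectedness the paper simply invokes Proposition~A4.6.6 of the Elephant: a connected geometric morphism whose inverse image is closed under subobjects is hyperconnected. Your kernel-pair argument --- $K\rightrightarrows icX$ embeds in the decidable object $icX\times icX$, hence lies in $\Un(\calE)$, and the couniversal property of the counit forces the two projections to coincide, so $\epsilon_X$ is monic --- is in effect an inline proof of exactly that implication of A4.6.6, specialized to the decidable-objects situation. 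What your version buys is independence from the Elephant citation and a transparent view of where closure under subobjects of decidables gets used; what the paper's version buys is brevity and the fact that A4.6.6 is reused elsewhere in the text anyway (e.g.\ in Lemma~\ref{LemHyperconDirectImagePreservesNegation}), so citing it costs nothing. No gaps: the facts you rely on (decidables closed under finite limits and subobjects, $i$ preserving finite limits, a map with diagonal kernel pair being monic) are all available and correctly deployed.
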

\begin{proof}
Let ${p^* : \Un(\calE) \rightarrow \calE}$ be the full subcategory of decidable objects.
If $p^*$ has a right adjoint  ${p_* : \calE \rightarrow \Un(\calE)}$  then $\Un(\calE)$ is the category  of coalgebras for a lex (and idempotent) comonad and hence it is a topos.
Also, $p_*$  is the direct image of a connected  geometric morphism ${p : \calE \rightarrow \Un(\calE)}$ and, since ${p^* : \Un(\calE) \rightarrow \calE}$ is closed under subobjects, $p$ is hyperconnected by Proposition~{A4.6.6} in \cite{elephant}.
\end{proof}

The following is surely a folklore result about extensive categories with finite products. We state it for toposes because we are dealing mainly with these.

\begin{lemma}\label{LemExtensiveFolk} Let $\calE$ and $\calS$ be toposes, and let  ${F : \calE \rightarrow \calS}$ be a functor that preserves finite products and finite coproducts,  and also reflects initial object. Then, for any $X$ in $\calE$, if $X$ is decidable and ${F X}$ is subterminal then $X$ is subterminal.
\end{lemma}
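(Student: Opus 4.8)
The plan is to translate decidability of $X$ into a coproduct decomposition of $X\times X$, transport it across $F$, and use that $FX$ being subterminal collapses one of the two summands to $0$.

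First, since $X$ is decidable, the diagonal $\Delta_X : X \to X\times X$ is complemented; let $d : D \to X\times X$ denote its complement, so that $\Delta_X$ and $d$ exhibit $X\times X$ as the coproduct $X + D$ in $\calE$. I claim it suffices to prove that $D$ is initial: if $D \cong 0$ then $\Delta_X : X \to X + D$ is a coproduct injection into $X + 0 \cong X$, hence an isomorphism, and an object whose diagonal is an isomorphism is subterminal (its unique map to $1$ is then monic).

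Now apply $F$. Since $F$ preserves finite coproducts, $F(\Delta_X)$ and $F(d)$ exhibit $F(X\times X)$ as $FX + FD$. Since $F$ preserves finite products, the canonical comparison $F(X\times X) \to FX\times FX$ is an isomorphism, and under it $F(\Delta_X)$ is carried to the diagonal $\Delta_{FX}$. By hypothesis $FX$ is subterminal, so $\Delta_{FX}$ is an isomorphism; therefore $F(\Delta_X)$ is an isomorphism, that is, the first coproduct injection $FX \to FX + FD$ is invertible. Coproducts in the topos $\calS$ are disjoint, so the pullback of the two injections $FX \to FX+FD \leftarrow FD$ is on the one hand $0$, and on the other hand, because the first injection is invertible, a copy of $FD$; hence $FD \cong 0$. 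As $F$ reflects the initial object, $D$ is initial, which is what remained to be shown.

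The only real subtlety is bookkeeping: the morphism $F(\Delta_X)$ must be recognized simultaneously as a coproduct injection (because $F$ preserves coproducts) and as the diagonal $\Delta_{FX}$ (because $F$ preserves products), so that invertibility forced on the product side can be read off on the coproduct side; everything else is a routine application of extensivity of $\calS$ and of the hypothesis that $F$ reflects the initial object.
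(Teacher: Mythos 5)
Your proposal is correct and follows essentially the same route as the paper's proof: transport the coproduct decomposition $X+D \cong X\times X$ through $F$, use subterminality of $FX$ to see the image of the diagonal is invertible, conclude $FD\cong 0$, and then reflect back to get $D\cong 0$ and $\Delta_X$ an isomorphism. You merely spell out two steps the paper leaves implicit (disjointness of coproducts forcing $FD\cong 0$, and the identification of $F(\Delta_X)$ with $\Delta_{FX}$ under the product-comparison isomorphism), which is fine.
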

\begin{proof}
Since $X$ is decidable we have a coproduct diagram as on the left below
$$\xymatrix{
X \ar[r]^-{\Delta} & X \times X & \ar[l]_-k  K & F X \ar[r]^-{\Delta} & F X \times F X & \ar[l]_-{F k} F K
}$$
and, since ${F : \calE \rightarrow \calS}$ preserves finite products and finite coproducts, the cospan on the right above is also a coproduct diagram. Since ${F X}$ is subterminal, ${\Delta  : F X \rightarrow F X \times F X}$ is an isomorphism, so ${F K}$ is initial. As $F$ reflects initial object by hypothesis, $K$ is initial, and then ${\Delta : X \rightarrow X\times X}$ is an isomorphism; which means that $X$ is subterminal.
\end{proof}

\section{Decidable objects in the domain of an essential map}

Let ${p : \calE \rightarrow \calS}$ be a geometric morphism.
It is called {\em essential} if $p^*$ has a left adjoint, typically denoted by ${p_! : \calE \rightarrow \calS}$.
For brevity and emphasis, we introduce the following ad-hoc terminology. The morphism $p$ is called {\em pressential} if it is essential and the left adjoint ${p_! : \calE \rightarrow \calS}$ preserves finite products.

\begin{lemma}\label{LemDecidableSubconnectedImpliesSubterminal} Assume that ${p : \calE \rightarrow \calS}$ is pressential. If $X$ in $\calE$ is decidable and ${p_! X}$ is subterminal in $\calS$ then $X$ is subterminal.
\end{lemma}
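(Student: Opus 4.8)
The plan is to obtain the statement as an immediate consequence of Lemma~\ref{LemExtensiveFolk}, applied to the functor $F = p_! : \calE \rightarrow \calS$. So the task reduces to checking that $p_!$ satisfies the three hypotheses of that lemma: preservation of finite products, preservation of finite coproducts, and reflection of the initial object. The first holds by the very definition of \emph{pressential}, and the second holds because $p_!$ is a left adjoint and hence preserves all colimits, in particular finite coproducts.

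The one point that needs a (short) argument is that $p_!$ reflects the initial object. The key observation is that $p^*$ is itself a left adjoint (to $p_*$), so it preserves the initial object; thus $p^* 0 \cong 0$ in $\calE$. Consequently, for every $X$ in $\calE$, the adjunction $p_! \dashv p^*$ supplies a bijection $\calE(X, 0) \cong \calE(X, p^* 0) \cong \calS(p_! X, 0)$. Hence $X$ receives a morphism to the initial object precisely when $p_! X$ does; since the initial object of a topos is strict, this says exactly that $X \cong 0$ if and only if $p_! X \cong 0$. In particular $p_!$ reflects $0$, and Lemma~\ref{LemExtensiveFolk} then gives the conclusion: if $X$ is decidable and $p_! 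X$ is subterminal, then $X$ is subterminal.

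There is no real obstacle here; all the work is already packaged in Lemma~\ref{LemExtensiveFolk}, and the present statement is just the remark that a pressential geometric morphism supplies a functor meeting its hypotheses. Should one prefer a self-contained argument, one can also reason directly: decidability of $X$ gives a coproduct decomposition $X \times X \cong X + K$ in which the injection of $X$ is the diagonal $\Delta$; applying $p_!$ (which preserves both the product and the coproduct) and using that $p_! X$ is subterminal forces the injection $p_! X \rightarrow p_! X + p_! K$ to be an isomorphism, so $p_! K \cong 0$ by extensivity; the reflection of $0$ established above then gives $K \cong 0$, whence $\Delta : X \rightarrow X \times X$ is an isomorphism and $X$ is subterminal.
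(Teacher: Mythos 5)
Your proof is correct and follows essentially the same route as the paper: both apply Lemma~\ref{LemExtensiveFolk} to $F = p_!$, using that $p_!$ preserves finite products (pressentiality) and finite coproducts (left adjoint), and that it reflects $0$ because a map $p_! X \rightarrow 0$ transposes along $p_! \dashv p^*$ to a map $X \rightarrow p^* 0 = 0$, whence $X$ is initial by strictness. Your spelled-out justification that $p^* 0 \cong 0$ (as $p^*$ is a left adjoint) and the optional unwinding of Lemma~\ref{LemExtensiveFolk} are fine but add nothing beyond the paper's argument.
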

\begin{proof}
As $p$ is pressential, $p_!$ preserves finite products and finite coproducts. If ${p_! K}$ is initial the isomorphism ${p_! K \rightarrow 0}$ transposes to a map ${K \rightarrow p^* 0 = 0}$, so  $K$ is initial. That is, $p_!$ reflects initial object and so Lemma~\ref{LemExtensiveFolk} is applicable.
\end{proof}

The next result is  well-known. It follows, for example, from A1.5.9 in \cite{elephant}.

\begin{lemma}\label{LemConnectedPressential} 
If ${p: \calE \rightarrow \calS}$ is pressential then, $p$ is connected if and only if ${p^* : \calS \rightarrow \calE}$ is cartesian closed. 
\end{lemma}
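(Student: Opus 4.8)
The plan is to analyse the canonical comparison morphism $\theta_{A,B} : p^*(B^A) \to (p^*B)^{p^*A}$ in $\calE$, for objects $A, B$ of $\calS$. Being an inverse image functor, $p^*$ preserves finite limits and in particular finite products, so $\theta_{A,B}$ is defined --- it is the transpose, along the exponential adjunction in $\calE$, of $p^*(B^A)\times p^*A \cong p^*(B^A\times A)\xrightarrow{p^*(\mathrm{ev})} p^*B$ --- and, since product-preservation is automatic here, ``$p^*$ is cartesian closed'' means precisely that $\theta_{A,B}$ is an isomorphism for all $A, B$. So the task is to show that this holds if and only if $p$ is connected.

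First I would re-express $\theta_{A,B}$ through the adjunction $p_! \dashv p^*$. For $X$ in $\calE$ there are natural bijections $\calE(X, p^*(B^A)) \cong \calS(p_!X, B^A) \cong \calS(p_!X\times A, B)$ and $\calE(X, (p^*B)^{p^*A}) \cong \calE(X\times p^*A, p^*B) \cong \calS(p_!(X\times p^*A), B)$, and chasing the definitions shows that $\calE(X, \theta_{A,B})$ is, under these, precomposition with the composite $p_!(X\times p^*A) \to p_!X\times p_!p^*A \xrightarrow{\mathrm{id}\times\epsilon_A} p_!X\times A$, in which the first arrow is the product-comparison map for $p_!$ and $\epsilon : p_!p^* \to \mathrm{Id}_\calS$ is the counit of $p_!\dashv p^*$. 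Since $p$ is pressential, $p_!$ preserves finite products, so that first arrow is always invertible; by the Yoneda lemma it then follows that all the $\theta_{A,B}$ are isomorphisms if and only if $\mathrm{id}_{p_!X}\times\epsilon_A : p_!X\times p_!p^*A \to p_!X\times A$ is an isomorphism for every $X$ in $\calE$ and every $A$ in $\calS$.

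Both implications are now immediate from the standard fact that, for $p_!\dashv p^*$, the right adjoint $p^*$ is fully faithful --- equivalently, $p$ is connected --- exactly when the counit $\epsilon$ is invertible. If $p$ is connected, each $\epsilon_A$ is invertible, hence so is $\mathrm{id}_{p_!X}\times\epsilon_A$, and $p^*$ is therefore cartesian closed. Conversely, if $p^*$ is cartesian closed, apply the criterion with $X = 1_\calE$: as $p_!$ preserves finite products, $p_!1_\calE \cong 1_\calS$, so $\mathrm{id}_{p_!1_\calE}\times\epsilon_A$ is, up to canonical isomorphism, just $\epsilon_A$; hence every $\epsilon_A$ is invertible, $p^*$ is fully faithful, and $p$ is connected.

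The only step that requires genuine care is the bookkeeping in the middle paragraph: checking that $\calE(X,\theta_{A,B})$ really is precomposition with that particular composite, i.e. that the product-comparison map for $p_!$ and the counit $\epsilon$ fit together as stated. This is a routine diagram chase with naturality and the triangle identities, and is exactly the kind of computation packaged in A1.5.9 in \cite{elephant}, from which the lemma can alternatively be deduced. Beyond that I do not anticipate any obstacle: the lemma reduces to the two elementary facts that $p^*$ is fully faithful precisely when the counit of $p_!\dashv p^*$ is invertible, and that $p_!$ preserves finite products.
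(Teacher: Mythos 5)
Your proof is correct. The diagram chase in your middle paragraph does work: the compatibility of the product-comparison maps for $p_!$ and $p^*$ with the counit $\epsilon$ of ${p_! \dashv p^*}$ gives exactly the identification you claim, so ``all $\theta_{A,B}$ invertible'' reduces to ``$\mathrm{id}_{p_!X}\times\epsilon_A$ invertible for all $X,A$''; the forward direction then uses that connectedness (full faithfulness of $p^*$) is equivalent to invertibility of $\epsilon$, and the converse correctly exploits $p_!1\cong 1$, which is available because ``pressential'' includes preservation of the empty product. Note, however, that the paper does not prove this lemma at all: it records it as well-known and simply points to A1.5.9 in \cite{elephant}, which packages the relation between product-preservation of the left adjoint, the Frobenius/exponential comparison, and cartesian closedness of $p^*$. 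So what you have produced is a self-contained verification of the folklore fact the paper invokes by citation, essentially reconstructing the content behind that reference (as you yourself observe at the end). The citation route is shorter; your route has the merit of making explicit exactly where each hypothesis enters --- product-preservation of $p_!$ kills the comparison map $p_!(X\times p^*A)\to p_!X\times p_!p^*A$, and connectedness is precisely the invertibility of $\epsilon_A$, isolated by setting $X=1$.
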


If $p$ is essential then, for every $B$ in $\calS$, the geometric morphism ${p/B : \calE/p^* B \rightarrow \calS/B}$ is also essential.
Indeed, if we let ${\tau}$ be the counit of  ${p_! \dashv p^*}$ then the functor ${(p/B)_! : \calE/p^* B \rightarrow  \calS/B}$  sends
each object ${x : X \rightarrow  p^* B}$ in ${\calE/p^* B}$ to the composite
$$\xymatrix{
p_! X \ar[r]^-{p_! x} & p_!(p^* B) \ar[r]^-{\tau} & B
}$$
as an object in ${\calS/B}$. See, for example, Lemma~{5.2} in \cite{LawvereMenni2015}.

Let us say that ${p : \calE \rightarrow \calS}$ is {\em stably pressential} if, for every $B$ in $\calS$, ${p/B : \calE/p^* B \rightarrow \calS/B}$ is pressential. Of course, a stably pressential geometric morphism is pressential.

\begin{proposition}\label{PropDecidableImpliesSubDiscrete} If ${p : \calE \rightarrow \calS}$ is stably pressential then, for every decidable $X$ in $\calE$, the unit ${ X \rightarrow p^* (p_! X)}$ of ${p_! \dashv p^*}$ is monic.
\end{proposition}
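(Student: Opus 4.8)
The plan is to reduce the statement to Lemma~\ref{LemDecidableSubconnectedImpliesSubterminal} by passing to an appropriate slice of $\calE$. Fix a decidable $X$ in $\calE$, put $B = p_! X$, and regard the unit ${\eta = \eta_X : X \rightarrow p^* B = p^*(p_! X)}$ as an object of the topos ${\calE/p^* B}$. Since $p$ is stably pressential, ${p/B : \calE/p^* B \rightarrow \calS/B}$ is pressential, so Lemma~\ref{LemDecidableSubconnectedImpliesSubterminal} applies to it. It therefore suffices to establish (i) that $\eta$ is a decidable object of ${\calE/p^* B}$, and (ii) that ${(p/B)_!\,\eta}$ is subterminal in ${\calS/B}$; the lemma then forces $\eta$ to be subterminal in ${\calE/p^* B}$, which says exactly that ${\eta_X : X \rightarrow p^*(p_! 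X)}$ is monic.

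Claim (ii) is immediate from the triangle identities. By the description of ${(p/B)_!}$ recalled above, the object ${(p/B)_!\,\eta}$ is the composite ${p_! X \xrightarrow{p_! \eta} p_!(p^* B) \xrightarrow{\tau} B}$, and this equals the identity on ${B = p_! X}$ by one of the triangle identities for ${p_! \dashv p^*}$. Hence ${(p/B)_!\,\eta}$ is the terminal object of ${\calS/B}$, in particular subterminal.

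Claim (i) is where the only real work lies, and I expect it to be the main (if modest) obstacle. A subobject of an object of ${\calE/p^* B}$ is complemented there precisely when its underlying subobject in $\calE$ is complemented, because coproducts in ${\calE/p^* B}$ are computed as in $\calE$; so it is enough to show that the diagonal ${d : X \rightarrow X\times_{p^* B} X}$ of $\eta$ is a complemented subobject of ${X\times_{p^* B} X}$ in $\calE$. Now ${X\times_{p^* B} X}$ is the equalizer of the two maps ${X\times X \rightrightarrows p^* B}$, hence a subobject ${m : X\times_{p^* B} X \rightarrow X\times X}$, and $d$ is the factorization of the genuine diagonal ${\Delta : X \rightarrow X\times X}$ through $m$ (so ${d = m^*\Delta}$). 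Since $X$ is decidable, $\Delta$ has a complement $\Delta'$ in ${\mathrm{Sub}(X\times X)}$. Pulling $\Delta$ and $\Delta'$ back along $m$ and using that $m^*$ preserves finite meets and joins and that the subobject lattices of a topos are distributive, one gets ${d \vee m^*\Delta' = m^*(\Delta \vee \Delta')}$, the maximal subobject of ${X\times_{p^* B} X}$, while ${d \wedge m^*\Delta' \le m^*(\Delta \wedge \Delta') = 0}$. As $\calE$ is extensive, these two facts exhibit ${X\times_{p^* B} X}$ as the coproduct of $d$ and ${m^*\Delta'}$, so $d$ is complemented and $\eta$ is decidable. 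Combining (i) and (ii) with Lemma~\ref{LemDecidableSubconnectedImpliesSubterminal} completes the proof.
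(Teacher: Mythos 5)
Your proof is correct and follows the same route as the paper: slice over $B=p_!X$, observe via a triangle identity that $(p/B)_!$ of the unit is the identity on $B$, hence terminal in $\calS/B$, and apply Lemma~\ref{LemDecidableSubconnectedImpliesSubterminal}. The only difference is in step (i): the paper simply cites Theorem~11(10) of Carboni--Janelidze (every map with decidable domain is decidable as an object of the slice), whereas you prove that special case directly by pulling the complemented diagonal back along the monomorphism ${X\times_{p^*B}X\rightarrow X\times X}$; your argument is sound, since pullback of subobjects in a topos preserves finite joins and meets, so it carries complements to complements.
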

\begin{proof}
Assume that $X$ is decidable and let ${B = p_! X}$. 
By Theorem~{11(10)} in \cite{Carboni96}, every map with decidable domain is decidable, so the unit ${\sigma : X \rightarrow p^*(p_! X) = p^* B}$ of ${p_! \dashv p^*}$ is decidable as an object in ${\calE/p^* B}$. By the explicit description of ${(p/B)_! : \calE/p^* B \rightarrow \calS/ B}$ after Lemma~\ref{LemConnectedPressential}, 
${(p/B)_! \sigma = id_{B}}$. That is, ${(p/B)_! \sigma}$ is terminal in $\calS/B$.
Lemma~\ref{LemDecidableSubconnectedImpliesSubterminal} implies that the object ${\sigma : X \rightarrow p^*(p_! X) = p^* B}$ in ${\calE/p^* B}$ is subterminal. That is, the unit  ${\sigma : X \rightarrow p^*(p_! X)}$ is monic in $\calE$.
\end{proof}

If ${p : \calE \rightarrow \calS}$ is hyperconnected then we say that an object $X$ in $\calE$ is {\em discrete} if the counit ${\beta_X : p^* (p_* X) \rightarrow X}$ is an isomorphism. It is well-known that, subobjects of discrete objects are discrete.

\begin{corollary}\label{CorDecidableImpliesDiscrete} If ${p : \calE \rightarrow \calS}$ is stably pressential and hyperconnected then every decidable object in $\calE$ is discrete.
\end{corollary}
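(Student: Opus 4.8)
The plan is to deduce the statement directly from Proposition~\ref{PropDecidableImpliesSubDiscrete} together with the standard fact, recalled just above the Corollary, that subobjects of discrete objects are discrete. First I would unpack the hypotheses: since $p$ is hyperconnected it is connected, hence in particular essential, and since it is stably pressential the left adjoint $p_!$ preserves finite products; in particular $p$ is stably pressential in the sense required by Proposition~\ref{PropDecidableImpliesSubDiscrete}, so that result is applicable.

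The argument then has three steps. The first step: given a decidable object $X$ in $\calE$, Proposition~\ref{PropDecidableImpliesSubDiscrete} gives that the unit ${\sigma : X \rightarrow p^*(p_! X)}$ of ${p_! \dashv p^*}$ is monic, so $X$ is a subobject of $p^*(p_! X)$. The second step: observe that $p^*(p_! X)$ is discrete. More generally, any object of the form $p^* A$ is discrete, because $p$ being connected makes $p^*$ fully faithful, so the unit $A \rightarrow p_*(p^* A)$ of ${p^* \dashv p_*}$ is an isomorphism, and then the triangle identity forces the counit ${\beta_{p^* A} : p^*(p_*(p^* A)) \rightarrow p^* A}$ to be an isomorphism as well, which is precisely the condition defining discreteness. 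The third step: combine the previous two — $X$ is a subobject of the discrete object $p^*(p_! X)$, and subobjects of discrete objects are discrete, so $X$ is discrete.

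I do not expect a genuine obstacle here, since the substantive content has already been extracted in Proposition~\ref{PropDecidableImpliesSubDiscrete}. The only point needing a moment's care is the second step, namely that the essential image of $p^*$ consists of discrete objects; but this is a purely formal consequence of $p^*$ being fully faithful, so the proof should be short.
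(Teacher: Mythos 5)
Your proof is correct and follows essentially the same route as the paper: Proposition~\ref{PropDecidableImpliesSubDiscrete} gives a monomorphism ${X \rightarrow p^*(p_! X)}$, objects of the form $p^* B$ are discrete (your triangle-identity argument from full faithfulness of $p^*$ is the standard justification), and subobjects of discrete objects are discrete. One harmless slip: connectedness does not imply essentialness, but this does not matter since being essential is already part of the (stably) pressential hypothesis.
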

\begin{proof}
Every object of the form ${p^* B}$ is discrete and, by Proposition~\ref{PropDecidableImpliesSubDiscrete}, we have a monomorphism ${X \rightarrow p^* B}$ if $X$ is decidable.
\end{proof}

If we strengthen the hypotheses we obtain a characterization.

\begin{corollary}\label{CorDecidableEqualsDiscrete} If $\calS$ is Boolean and ${p : \calE \rightarrow \calS}$ is stably pressential and hyperconnected then, an object in $\calE$ is decidable if and only if it is discrete.
\end{corollary}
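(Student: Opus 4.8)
The plan is to combine the already-established implication ``decidable $\Rightarrow$ discrete'' (Corollary~\ref{CorDecidableImpliesDiscrete}) with the reverse implication, which is where the new hypothesis that $\calS$ is Boolean gets used. So the only thing to prove is that every discrete object of $\calE$ is decidable. Let $X$ be discrete, so the counit ${\beta_X : p^*(p_* X) \rightarrow X}$ is an isomorphism; it suffices to show $p^* B$ is decidable for every $B$ in $\calS$, and then transport along the iso. Since $\calS$ is Boolean, the diagonal ${\Delta : B \rightarrow B \times B}$ is complemented in $\calS$, i.e.\ there is a coproduct diagram ${B \rightarrow B\times B \leftarrow K}$ in $\calS$.

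The key step is then that $p^*$ preserves this coproduct diagram. This is immediate: $p^*$ is the inverse image of a geometric morphism, hence preserves finite products and finite coproducts (indeed all colimits and finite limits). Applying $p^*$ to ${B \rightarrow B\times B \leftarrow K}$ yields a coproduct diagram ${p^* B \rightarrow p^*(B \times B) \leftarrow p^* K}$, and since $p^*$ preserves products this is ${p^* B \xrightarrow{\Delta} p^* B \times p^* B \leftarrow p^* K}$. Hence $\Delta$ for $p^* B$ is complemented, i.e.\ $p^* B$ is decidable. (Note this half uses only that $\calS$ is Boolean and that $p^*$ is an inverse image functor; hyperconnectedness and stable pressentiality are not needed here.)

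To finish, take a general discrete $X$. Put $B = p_* X$; then ${\beta_X : p^* B \rightarrow X}$ is an isomorphism, and decidability is invariant under isomorphism (it is a property of the diagonal, which $\beta_X$ carries to the diagonal of $X$ via naturality of $\Delta$), so $X$ is decidable. Combined with Corollary~\ref{CorDecidableImpliesDiscrete} for the converse direction, this gives the biconditional. I do not anticipate a real obstacle here: the ``only if'' direction is handed to us by the previous corollary, and the ``if'' direction reduces to the elementary fact that inverse image functors preserve the finite-coproduct-plus-finite-product data witnessing decidability, together with the Booleanness of $\calS$ to supply that data for the objects $p^* B$. The one point to state carefully is the passage from ``$p^* B$ decidable for all $B$'' to ``every discrete object decidable'', which is just the isomorphism-invariance of decidability applied to the counit.
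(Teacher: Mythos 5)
Your proposal is correct and follows the paper's own argument: the discrete-implies-decidable direction uses exactly that $\calS$ is Boolean and that $p^*$ preserves finite products and finite coproducts (you merely spell out the isomorphism-transport along the counit, which the paper leaves implicit), and the converse is quoted from Corollary~\ref{CorDecidableImpliesDiscrete}, as in the paper.
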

\begin{proof}
If $X$ is discrete then it is trivially decidable because $\calS$ is Boolean and ${p^* :\calS \rightarrow \calE}$ preserves finite products and finite coproducts.
The converse follows from Corollary~\ref{CorDecidableImpliesDiscrete}.
\end{proof}

A geometric morphism ${p : \calE \rightarrow \calS}$ is called {\em pre-cohesive} if it is local, hyperconnected and pressential (see \cite{MenniExercise, LawvereMenni2015}). In other words, it is a string of adjoints ${p_! \dashv p^* \dashv p_* \dashv p^!}$ with full and faithful ${p^*, p^! : \calS \rightarrow \calE}$, such that ${p^* : \calS \rightarrow \calE}$ is closed under subobjects and ${p_! :\calE \rightarrow \calS}$ preserves finite products.

We know from \cite{LawvereMenni2015} that, if ${p : \calE \rightarrow  \calS}$ is pre-cohesive then, for every object $B$ in $\calS$, the sliced geometric morphism ${p/B : \calE/p^* B \rightarrow \calS/B}$ is `almost' pre-cohesive, in the sense that all the defining conditions hold except, perhaps,  finite-product preservation of the leftmost adjoint ${(p/B)_! : \calE/p^* B \rightarrow \calS/B}$. For this reason we say that ${p : \calE \rightarrow \calS}$ is {\em stably pre-cohesive} if ${p/B : \calE/p^* B \rightarrow \calS/B}$ is pre-cohesive for every $B$ in $\calS$; that is, if the leftmost adjoints ${(p/B)_! : \calE/p^* B \rightarrow \calS/B}$ preserve finite products.
Alternatively, a pre-cohesive geometric morphism is stably so if and only if it is stably pressential.

\begin{corollary}\label{CorUIAO} If $\calS$ is Boolean and  ${p : \calE \rightarrow \calS}$ 
is a stably pre-cohesive geometric morphism then
${p_* : \calE \rightarrow \calS}$ is a Unity and Identity for the subcategories ${\Un(\calE) \rightarrow \calE}$ and ${\calE_{\neg\neg} \rightarrow \calE}$, making them Adjointly Opposite.
\end{corollary}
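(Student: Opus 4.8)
The plan is to read the unity and identity directly off the chain of adjoints that a stably pre-cohesive $p$ carries, and then to replace the (essential) images of $p^*$ and $p^!$ by the two named subcategories using results already at hand. First I would record that, since $p$ is pre-cohesive, it is in particular local, hyperconnected and essential, so there is a chain $p_! \dashv p^* \dashv p_* \dashv p^!$ with both $p^* : \calS \to \calE$ and $p^! : \calS \to \calE$ full and faithful. A full and faithful left adjoint $p^*$ of $p_*$ exhibits its essential image --- the full subcategory of discrete objects --- as a coreflective subcategory of $\calE$, with coreflection $X \mapsto p^*(p_* X)$; dually, the full and faithful right adjoint $p^!$ exhibits its essential image --- the full subcategory of codiscrete objects --- as a reflective subcategory, with reflection $X \mapsto p^!(p_* X)$. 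Because the unit of $p^* \dashv p_*$ and the counit of $p_* \dashv p^!$ are isomorphisms, the composites $\calS \xrightarrow{p^*} \calE \xrightarrow{p_*} \calS$ and $\calS \xrightarrow{p^!} \calE \xrightarrow{p_*} \calS$ are both isomorphic to the identity; this says precisely that $p_* : \calE \to \calS$ composes with each of the two inclusions (of discrete and of codiscrete objects) to give an equivalence. Moreover, regarded via $p^*$ as landing in the discrete objects, $p_*$ is right adjoint to their inclusion, and regarded via $p^!$ as landing in the codiscrete objects, it is left adjoint to their inclusion; so the single functor $p_*$ is a Unity and Identity making these two inclusions Adjointly Opposite, in the sense recalled in the Introduction.

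It then remains only to identify the two subcategories with the images of $p^*$ and $p^!$. On the discrete side, Corollary~\ref{CorDecidableEqualsDiscrete} applies --- its hypotheses hold since $\calS$ is Boolean and $p$, being stably pre-cohesive, is in particular stably pressential and hyperconnected --- so an object of $\calE$ is decidable if and only if it is discrete; hence ${\Un(\calE) \to \calE}$ is exactly the inclusion of the discrete objects, i.e. of the essential image of $p^*$. On the codiscrete side, as noted in the Introduction, Proposition~4.4 in \cite{LawvereMenni2015} applied to item~(1) gives that the subtopos ${p_* \dashv p^! : \calS \to \calE}$ coincides with ${\calE_{\neg\neg} \to \calE}$, i.e. the essential image of $p^!$ is exactly the category of $\neg\neg$-sheaves. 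Substituting these two identifications into the conclusion of the previous paragraph yields the Corollary.

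I do not expect a genuine obstacle: the statement is essentially an assembly of Corollary~\ref{CorDecidableEqualsDiscrete}, the cited Proposition~4.4, and the formalities of the adjoint chain. The one point I would be careful about is the coherence of the identifications --- that ``coincides with'' in Proposition~4.4 is the strong assertion that $\calE_{\neg\neg}$ and the essential image of $p^!$ are the \emph{same} full replete subcategory of $\calE$ (not merely equivalent abstract categories), and likewise for $\Un(\calE)$ and the essential image of $p^*$ via Corollary~\ref{CorDecidableEqualsDiscrete}, so that the abstract Unity and Identity of adjointly opposites attached to $p^* \dashv p_* \dashv p^!$ really does restrict to one for the two named inclusions. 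A secondary, purely cosmetic, remark is that, following the observation in the Introduction that any such Unity and Identity is canonically isomorphic to one whose composite isomorphisms are literally identities, one may normalise $p_*$ accordingly if desired; this is not needed for the statement as phrased.
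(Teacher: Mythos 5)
Your proposal is correct and follows essentially the same route as the paper: the paper's proof likewise invokes Corollary~\ref{CorDecidableEqualsDiscrete} (applicable because stably pre-cohesive implies stably pressential and hyperconnected) to identify $\Un(\calE)$ with the discrete objects, and Proposition~4.4 of \cite{LawvereMenni2015} to identify the subtopos $p_* \dashv p^!$ with $\calE_{\neg\neg}$, leaving the UIAO structure of the adjoint string $p^* \dashv p_* \dashv p^!$ implicit. Your additional spelling-out of why $p_*$ is a Unity and Identity for the two inclusions, and the remark about the identifications being of full replete subcategories rather than mere equivalences, are accurate elaborations of what the paper takes for granted.
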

\begin{proof}
Since stably pre-cohesive implies stably pressential and hyperconnected, 
Corollary~\ref{CorDecidableEqualsDiscrete} implies that ${p^* : \calS \rightarrow \calE}$ coincides with ${\Un(\calE) \rightarrow \calE}$. By Proposition~{4.4} in \cite{LawvereMenni2015} the subtopos ${p_* \dashv p^! : \calS \rightarrow \calE}$ coincides with  ${\calE_{\neg\neg} \rightarrow \calE}$. 
\end{proof}

We end this section with a brief discussion on the relation between (stably) pre-cohesive, (stably) essential and (stably) locally connected geometric morphisms.

\begin{lemma}\label{LemDefStablyLocc}
If ${p : \calE \rightarrow \calS}$ is a geometric morphism then the following are equivalent:
\begin{enumerate}
\item $p$ is locally connected and ${p_! : \calE \rightarrow \calS}$ preserves finite products,
\item $p$ is connected essential and the left-most adjoint ${p_! : \calE \rightarrow \calS}$ sends pullbacks
$$\xymatrix{
P \ar[d]_-{\pi_0} \ar[r]^-{\pi_1} & Y \ar[d]^-y \\
X \ar[r]_-x & p^* B
}$$
in $\calE$ to pullbacks in $\calS$ (in other words, the adjunction ${p_! \dashv p^*}$ has stable units),
\item $p$ is connected essential and ${(p/B)_! : \calE/p^* B \rightarrow \calS/B}$ preserves finite products for every $B$ in $\calS$.
\item $p$ is connected and stably pressential.
\end{enumerate}
\end{lemma}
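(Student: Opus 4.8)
The plan is to establish $(2)\Leftrightarrow(3)\Leftrightarrow(4)$ by unwinding the definitions together with the explicit descriptions of $p_!$ and $(p/B)_!$ recalled above, and to deduce $(1)\Leftrightarrow(2)$ from the standard theory of locally connected geometric morphisms; recall that, as the statement itself notes, $(2)$ is exactly the assertion that the adjunction $p_!\dashv p^*$ has stable units. One preliminary remark makes the bookkeeping cleaner: statement $(1)$ already forces $p$ to be connected, since a locally connected morphism whose left adjoint preserves the terminal object is connected (apply the Beck--Chevalley isomorphism with $B'=1$, $X=1$). So the hypothesis ``$p$ connected'' in $(2)$--$(4)$ is no obstruction.

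The equivalence $(3)\Leftrightarrow(4)$ is essentially definitional: $p$ is stably pressential exactly when $p/B$ is pressential — that is, essential with finite-product-preserving left adjoint — for every $B$, and since $p$ essential already entails that $p/B$ is essential for every $B$ (as recalled above), ``connected and stably pressential'' unwinds precisely to the conjunction in $(3)$.

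For $(2)\Leftrightarrow(3)$, fix $B$. Since $p$ is connected the counit component $\tau : p_!(p^*B)\to B$ is invertible, so $(p/B)_!$ automatically sends the terminal object $\mathrm{id}_{p^*B}$ of $\calE/p^*B$ to an object isomorphic to the terminal object of $\calS/B$; hence $(p/B)_!$ preserves finite products iff it preserves binary products. Given $x : X\to p^*B$ and $y : Y\to p^*B$, their product in $\calE/p^*B$ is the pullback $P$ of the square displayed in $(2)$, which $(p/B)_!$ carries to $p_!P\to B$, while the product of $(p/B)_!x$ and $(p/B)_!y$ is $p_!X\times_B p_!Y\to B$; identifying $p_!X\times_B p_!Y$ with $p_!X\times_{p_!(p^*B)}p_!Y$ along $\tau$, preservation of this product says exactly that $p_!$ carries the square $P$ to a pullback. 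Quantifying over $B$, $x$, $y$ gives $(2)\Leftrightarrow(3)$.

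It remains to prove $(1)\Leftrightarrow(2)$, which is where the only genuinely non-formal step occurs. For $(2)\Rightarrow(1)$: the case $B=1$ of the pullback in $(2)$ shows that $p_!$ preserves binary products, hence (with $\tau$ at $1$) finite products, and the case where the right-hand leg of the square has the form $p^*h$ is precisely the Beck--Chevalley condition that, for essential morphisms, characterizes local connectedness (see \cite[C3.3]{elephant}). For $(1)\Rightarrow(2)$ one must upgrade this Beck--Chevalley condition — which a priori only controls pullbacks along maps of the form $p^*h$ — to preservation of \emph{all} pullbacks over objects $p^*B$. The key is the identification $P\cong(X\times Y)\times_{p^*(B\times B)}p^*(\Delta_B)$, which exhibits $P$ as a pullback along $p^*(\Delta_B)$ for the diagonal $\Delta_B : B\to B\times B$: applying $p_!$, the Beck--Chevalley isomorphism handles the $p^*(\Delta_B)$-leg and finite-product preservation of $p_!$ rewrites $p_!(X\times Y)$ as $p_!X\times p_!Y$, yielding $p_!P\cong p_!X\times_{p_!(p^*B)}p_!Y$ via the canonical comparison. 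This upgrade is the main obstacle; everything else reduces to checking that the isomorphisms produced above are the canonical comparison maps, which is routine given the explicit formulas for $p_!$ and $(p/B)_!$.
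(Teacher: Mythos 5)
Your proposal is correct, but it takes a different route from the paper, which does essentially no work here: it simply cites Proposition~10.2 of \cite{LawvereMenni2015} for the equivalence of the first three items and observes that the fourth is, by definition of ``stably pressential'', a restatement of the third. You instead give a self-contained argument, and it holds up: $(3)\Leftrightarrow(4)$ is definitional; $(2)\Leftrightarrow(3)$ is the correct unwinding of the explicit formula for $(p/B)_!$ together with the invertibility of the counit $\tau$ (which is exactly connectedness of $p$); and for $(1)\Leftrightarrow(2)$ you correctly identify the one non-formal step, namely upgrading the Beck--Chevalley condition for pullbacks along maps of the form $p^*h$ (which, with essentialness, characterizes local connectedness as in \cite{elephant}, C3.3) to preservation of arbitrary pullbacks over $p^*B$, via the factorization of $X\times_{p^*B}Y$ as the pullback of $X\times Y\rightarrow p^*(B\times B)$ along $p^*(\Delta_B)$ combined with finite-product preservation of $p_!$. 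This diagonal trick is exactly the content of the cited Proposition~10.2, so in substance you have reconstructed the proof the paper delegates to its reference. What your version buys is transparency --- the reader sees where finite-product preservation and Beck--Chevalley each enter --- at the cost of length and of some routine verifications (that the isomorphisms you produce are the canonical comparison maps), which you rightly flag but do not carry out; the paper's citation buys brevity and avoids re-proving a published result. Your preliminary remark that item $(1)$ already forces connectedness (via Frobenius reciprocity, $p_!(p^*A)\cong p_!1\times A\cong A$) is also correct and worth making explicit, since the paper's item $(1)$ does not mention connectedness while items $(2)$--$(4)$ assume it.
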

\begin{proof}
The equivalence of the first three items is proved in Proposition~{10.2} in \cite{LawvereMenni2015}. We have added the fourth item for emphasis which is, almost by definition, equivalent to the third.
\end{proof}

Geometric morphisms $p$ satisfying the conditions in the first item of Lemma~\ref{LemDefStablyLocc} are called {\em stably locally connected} by Johnstone in \cite{Johnstone2011}. (These are always connected.) As mentioned in \cite{LawvereMenni2015}, he informed us that the terminology was chosen by analogy with `stably locally compact'. On the other hand, the word ``stably" in ``stably pressential" refers to stability under slicing of the property that the leftmost adjoint preserves finite products.  In parallel, notice that the notion of local connectedness is also a form of stability in the latter sense, namely, stability under slicing of the property that inverse image is cartesian closed. 
In the arguments we use, stability of finite-product preservation of the leftmost adjoint seems more immediately applicable; for example, as in Lemma~\ref{LemDecidableSubconnectedImpliesSubterminal}.

As a corollary of Lemma~\ref{LemDefStablyLocc} one gets that a pre-cohesive geometric morphism is stably so if and only if it is locally connected (see Corollary~{10.4} in \cite{LawvereMenni2015}). 
We still do not know if every pre-cohesive geometric morphism is stably so.

\section{A remark about the Nullstellensatz}

Let ${p : \calE \rightarrow \calS}$ be a geometric morphism. 
If $p$ is connected and essential then there is a canonical natural transformation ${\theta : p_* \rightarrow p_!}$ and, following \cite{Lawvere07}, we say that $p$ satisfies the {\em Nullstellensatz} if ${\theta_X : p_* X \rightarrow p_! X}$ is epic for every $X$ in $\calE$.

It follows from \cite{Johnstone2011} that, if $p$ is essential and local (and hence connected), $p$ satisfies the Nullstellesatz if and only if $p$ is hyperconnected. See also \cite{LawvereMenni2015}. In this case, it follows from Lemma~{4.1} in \cite{LawvereMenni2015} that the rightmost adjoint ${p^! : \calS \rightarrow \calE}$ preserves $0$. (We have already used this tacitly via the invocation to Proposition~{4.4} loc.~cit. in the proof of Corollary~\ref{CorUIAO} above.)

F.~Marmolejo once pointed my attention to the fact that Lemma~{4.1} in \cite{LawvereMenni2015} (saying that the codiscrete inclusion ${p^! : \calS \rightarrow \calE}$ is dense) could be seen as a sufficient condition for a direct image to reflect $0$. This observation leads to the following  variant of that result.

\begin{lemma}\label{LemMarmolejo} If ${p : \calE \rightarrow \calS}$ is connected essential, and the Nullstellensatz holds then each of the items below implies the next one
\begin{enumerate}
\item ${p_* X}$ is initial,
\item ${p_! X}$ is initial,
\item $X$ is initial,
\end{enumerate}
for every $X$ in $\calE$.
If, moreover, $p_*$ preserves $0$ then the three items are equivalent.
\end{lemma}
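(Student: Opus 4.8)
The plan is to prove the two implications $(1)\Rightarrow(2)\Rightarrow(3)$ directly, and then observe that the reverse implications $(3)\Rightarrow(2)$ and $(2)\Rightarrow(1)$ both follow from the hypothesis that $p_*$ preserves $0$.

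For $(1)\Rightarrow(2)$: suppose $p_* X$ is initial. The Nullstellensatz gives us an epimorphism $\theta_X : p_* X \rightarrow p_! X$. Since $p_* X \cong 0$ and $0$ is strict in a topos, the existence of an epi out of $0$ onto $p_! X$ forces $p_! X$ to be initial as well. (Concretely, an epi with initial domain is an epi $0 \rightarrow p_! X$; composing with the unique map $p_! X \rightarrow 1$ and using that in a topos a map from $0$ is automatically epi only when its codomain is $0$ — more cleanly, $0 \rightarrow p_! X$ being epi means the two coproduct injections $p_! X \rightrightarrows p_! X +_0 p_! X = p_! X + p_! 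X$ agree, which by disjointness of coproducts in a topos forces $p_! X$ initial.)

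For $(2)\Rightarrow(3)$: this is exactly the argument already used in the proof of Lemma~\ref{LemDecidableSubconnectedImpliesSubterminal} for the special case of a strict initial object, but here it is even simpler since $p^* 0 = 0$. If $p_! X$ is initial, the isomorphism $p_! X \rightarrow 0$ transposes across $p_! \dashv p^*$ to a map $X \rightarrow p^* 0 = 0$ (using that $p^*$, being the inverse image of a geometric morphism, preserves the initial object), and a map into the strict initial object $0$ exhibits $X$ as initial. So $p_!$ reflects the initial object, which is the content of $(2)\Rightarrow(3)$.

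For the final sentence, assuming in addition that $p_*$ preserves $0$: we must close the loop. The implication $(3)\Rightarrow(2)$ is immediate — if $X \cong 0$ then $p_!$ preserves $0$ (it is a left adjoint, hence preserves all colimits including the initial object), so $p_! X \cong 0$. And $(2)\Rightarrow(1)$ likewise: if $p_! X$ is initial then $X$ is initial by the implication just proved, and then $p_* X \cong p_* 0 \cong 0$ precisely because we are now assuming $p_*$ preserves $0$. Hence all three are equivalent. The only step with any subtlety is $(1)\Rightarrow(2)$, where one must be a little careful about why an epimorphism out of an initial object in a topos forces its codomain to be initial; I expect this to be the main (minor) obstacle, and it is handled by invoking strictness and disjointness of coproducts, both standard in any topos.
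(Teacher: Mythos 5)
Your proof is correct and follows essentially the same route as the paper: $(1)\Rightarrow(2)$ from the Nullstellensatz epimorphism $p_*X \rightarrow p_!X$ together with the fact that an epi out of $0$ forces its codomain to be initial, and $(2)\Rightarrow(3)$ by transposing $p_!X \rightarrow 0$ across $p_! \dashv p^*$ and using strictness of $0$. You merely spell out two points the paper leaves implicit (why an epi with initial domain has initial codomain, and how $p_*0=0$ closes the loop), and both of these details are handled correctly.
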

\begin{proof}
The first item implies the second because ${p_* X \rightarrow p_! X}$ is epic by the Nullstellensatz. The second item implies the third because we can transpose ${p_! X \rightarrow 0}$ to ${X \rightarrow p^* 0 = 0}$. 
\end{proof}

Intuitively, a space is empty iff it has no points iff it has no pieces.

It follows that if ${p : \calE \rightarrow \calS}$ is local, essential and satisfies the Nullstellensatz then ${p^! 0 = 0}$.
Indeed, since $p^!$ is fully faithful, ${p_* (p^! 0) = 0}$, so ${p^! 0}$ is initial by Lemma~\ref{LemMarmolejo}.
Compare with the proof of Lemma~{4.1} in \cite{LawvereMenni2015}.
We give below a strengthening of this result, but first we need a couple of remarks about hyperconnected geometric morphisms.

\begin{lemma}\label{LemForNewDenseness} If the geometric morphism ${p : \calE \rightarrow \calS}$ is hyperconnected then ${p_* : \calE \rightarrow \calS}$ is faithful on morphisms whose domain is discrete.
\end{lemma}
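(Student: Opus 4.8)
The plan is to exploit hyperconnectedness directly via the adjunction $p^* \dashv p_*$ together with the fact that the counit $\beta$ is monic. Suppose $f, g : X \to Y$ are parallel morphisms in $\calE$ with $X$ discrete, and suppose $p_* f = p_* g$ in $\calS$. I want to conclude $f = g$. Since $X$ is discrete, the counit $\beta_X : p^*(p_* X) \to X$ is an isomorphism, so it suffices to show that $f \circ \beta_X = g \circ \beta_X$, i.e.\ that $f$ and $g$ agree after precomposition with an iso.

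First I would use naturality of the counit $\beta$ of $p^* \dashv p_*$: for any morphism $h : X \to Y$ the square relating $\beta_X$, $\beta_Y$, $p^*(p_* h)$ and $h$ commutes, i.e.\ $h \circ \beta_X = \beta_Y \circ p^*(p_* h)$. Applying this to both $f$ and $g$, and using the hypothesis $p_* f = p_* g$ (hence $p^*(p_* f) = p^*(p_* g)$), I get
$$
f \circ \beta_X = \beta_Y \circ p^*(p_* f) = \beta_Y \circ p^*(p_* g) = g \circ \beta_X.
$$
Since $\beta_X$ is an isomorphism (because $X$ is discrete), I may cancel it on the right to conclude $f = g$. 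Note this argument does not even use monicity of $\beta_Y$; it only uses that $\beta_X$ is invertible, which is precisely discreteness of the domain. The monicity of $\beta$ in the definition of hyperconnected is what guarantees the notion of ``discrete object'' behaves well (e.g.\ that subobjects of discrete objects are discrete, as recalled in the text), but the faithfulness statement itself is pure naturality.

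There is essentially no obstacle here — the only thing to be careful about is the direction of the counit and which triangle identity or naturality square is being invoked, and the fact that for discrete $X$ we genuinely have $\beta_X$ an iso rather than merely monic. One could equivalently phrase it via the unit $\alpha : \mathrm{id}_{\calS} \to p_* p^*$ (which is an iso since $p$ is connected): a morphism out of a discrete object $X \cong p^* A$ is determined by its transpose under $p^* \dashv p_*$, and $p_*$ computes that transpose up to the iso $\alpha$, so two such morphisms with equal image under $p_*$ are equal. Either formulation gives a two-line proof, so I would simply write out the naturality-square version above.
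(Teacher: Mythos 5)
Your proof is correct and follows essentially the same route as the paper: both arguments use the naturality square of the counit $\beta$ of $p^* \dashv p_*$ and then cancel $\beta$ at the domain. The only cosmetic difference is that the paper writes the discrete domain in the form $p^* A$ and cancels $\beta_{p^*A}$ as an epimorphism, whereas you cancel $\beta_X$ directly as the isomorphism furnished by discreteness.
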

\begin{proof}
Let ${f, g : p^* A \rightarrow X}$ be morphisms  in $\calE$ and assume that ${p_* f = p_* g : p_* (p^* A) \rightarrow p_* X}$. Then the following diagram commutes
$$\xymatrix{
p^* (p_* (p^* A)) \ar[d]_-{\beta} \ar[rr]^-{p^* (p_* f)}_-{p^* (p_* g)} & & p^*(p_* X) \ar[d]^-{\beta} \\
p^* A \ar[rr]<+1ex>^-f \ar[rr]<-1ex>_-g & & X
}$$
and, as the left vertical map is epic, ${f = g}$.
\end{proof}

For the next remark it is convenient to distinguish notationally the subobject classifiers of $\calE$ and $\calS$. Let ${\tau : p_* (\Omega_{\calE}) \rightarrow \Omega_{\calS}}$ be the classifying map of ${p_* \top : p_* 1 \rightarrow p_* (\Omega_{\calE})}$.  Proposition~{A4.6.6} in \cite{elephant} implies that  $p$ is hyperconnected if and only if  $\tau$ is an isomorphism. For many arguments we will not need this notational distinction; we can just use that ${p_* \top : p_* 1 \rightarrow p_* \Omega}$ is a subobject classifier in $\calS$.

\begin{proposition}\label{PropNewDenseness} Let ${p : \calE \rightarrow \calS}$ be a local geometric morphism. If $p$ is hyperconnected then ${p^! : \calS \rightarrow \calE}$ preserves $0$. If $\calS$ is Boolean then the converse holds.
\end{proposition}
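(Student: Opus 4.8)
The plan is to establish the two implications separately. Throughout I will use the following structural facts about a local geometric morphism $p$: it is connected, so $p^*$ is fully faithful, $p_*p^* \cong \mathrm{id}_\calS$, and the counit $\beta_{p^*Y}$ of $p^* \dashv p_*$ is invertible; since $p_*$ has the right adjoint $p^!$ it preserves all colimits, in particular $p_*0 = 0$ and $p_*$ preserves finite coproducts; and $p^!$, being a right adjoint, preserves the terminal object and monomorphisms, and satisfies $p_*p^! \cong \mathrm{id}_\calS$ because $p^!$ is fully faithful.

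For the forward implication I would argue as follows. Assume $p$ is hyperconnected. Applying $p^!$ to $0 \rightarrowtail 1$ and using that $p^!$ preserves $1$ and monomorphisms exhibits $p^!0$ as a subobject of $1 = p^*1$ in $\calE$. Since $p$ is hyperconnected, the image of $p^*$ is closed under subobjects (Proposition~A4.6.6 in \cite{elephant}), so $p^!0 \cong p^*W$ for a subobject $W \rightarrowtail 1$ in $\calS$. Now apply $p_*$: using $p_*p^! \cong \mathrm{id}_\calS$ and $p_*p^* \cong \mathrm{id}_\calS$ we get $0 \cong p_*(p^!0) \cong p_*(p^*W) \cong W$, hence $p^!0 \cong p^*0 = 0$. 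Note this direction needs neither Booleanness of $\calS$ nor anything about $p$ beyond hyperconnectedness.

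For the converse, assume $\calS$ is Boolean and $p^!0 = 0$. The first observation is that $p_*$ reflects the initial object: if $p_*Z = 0$ then the unit $Z \to p^!(p_*Z) = p^!0 = 0$ forces $Z \cong 0$. I would then check that the image of $p^*$ is closed under subobjects, which together with $p^*$ fully faithful gives that $p$ is hyperconnected, again by Proposition~A4.6.6 in \cite{elephant}. So let $m : A \rightarrowtail p^*Y$ be a monomorphism in $\calE$ and set $W := p_*A$, a subobject of $Y \cong p_*p^*Y$ (a monomorphism, since $p_*$ preserves them). By naturality of $\beta$ and the triangle identities, the composite $p^*W = p^*(p_*A) \xrightarrow{\beta_A} A \xrightarrow{m} p^*Y$ equals, up to the canonical isomorphism $p_*p^*Y \cong Y$, the morphism $p^*$ applied to $p_*A \rightarrowtail Y$; in particular $\beta_A$ is monic, so as subobjects of $p^*Y$ we have $p^*W \subseteq A$. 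Since $\calS$ is Boolean, $W \rightarrowtail Y$ is complemented, $Y \cong W + \neg W$, and $p^*$ preserves finite coproducts, so $p^*Y \cong p^*W + p^*(\neg W)$; combined with $p^*W \subseteq A \subseteq p^*Y$ this yields $A \cong p^*W + A_0$, where $A_0 := A \cap p^*(\neg W)$. Applying $p_*$, which preserves finite coproducts, gives $W = p_*A \cong W + p_*A_0$ with $p_*A_0 \subseteq p_*p^*(\neg W) \cong \neg W$ disjoint from $W$; hence $p_*A_0 = 0$, and therefore $A_0 = 0$ because $p_*$ reflects $0$. Thus $A \cong p^*W$, as required.

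The main obstacle I anticipate is in the converse, specifically the subobject bookkeeping: making sure that ``$p^*W \subseteq A$'' and the decomposition $A \cong p^*W + A_0$ are genuine identities of subobjects of $p^*Y$ (this is where the naturality of $\beta$ and the invertibility of $\beta_{p^*Y}$ are used, so it is worth spelling out that a local geometric morphism is connected). The role of Booleanness of $\calS$ is precisely to upgrade $W \rightarrowtail Y$ to a complemented subobject, which is what makes the otherwise merely dense inclusion $p^*W \subseteq A$ collapse to an equality once we know $p_*$ reflects the initial object; without Booleanness one only gets that $A$ is dense over $p^*W$, which need not force $A = p^*W$.
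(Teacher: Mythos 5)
Your argument is correct, but it follows a genuinely different route from the paper's on both halves. For the forward direction the paper first shows $p^!0$ is subterminal via the adjunction $p_*\dashv p^!$, then classifies $p^!0\rightarrowtail 1$ by a map $\chi:1\rightarrow\Omega_{\calE}$, uses the isomorphism $\tau:p_*\Omega_{\calE}\rightarrow\Omega_{\calS}$ from Proposition~A4.6.6 of \cite{elephant} to conclude $p_*\chi=p_*\bot$, and finally invokes Lemma~\ref{LemForNewDenseness} (faithfulness of $p_*$ on maps with discrete domain) to get $\chi=\bot$. You instead use the other clause of A4.6.6 --- closure of the image of $p^*$ under subobjects --- to write $p^!0\cong p^*W$ and then kill $W$ by $p_*p^!\cong \mathrm{id}\cong p_*p^*$; this is shorter, avoids the subobject classifier and the auxiliary faithfulness lemma entirely, and is really just the observation that a subobject of the discrete object $1$ is discrete. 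For the converse the paper simply cites Lemma~4.2 of \cite{LawvereMenni2015}, whereas you supply a complete proof: $p_*$ reflects $0$ because the unit $Z\rightarrow p^!(p_*Z)$ lands in $p^!0=0$, and then the complemented decomposition $Y\cong W+\neg W$ in the Boolean $\calS$ forces any $A\rightarrowtail p^*Y$ to coincide with $p^*(p_*A)$. The subobject bookkeeping you flag ($p^*W\subseteq A$ via naturality of $\beta$ and invertibility of $\beta_{p^*Y}$, extensivity giving $A\cong p^*W+A_0$, and $p_*$ preserving coproducts because it has the right adjoint $p^!$) all checks out, so your version has the advantage of being self-contained where the paper defers to an external reference.
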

\begin{proof}
First observe that if $p$ is local then ${p^! 0}$ is subterminal. Indeed, for any $X$ in $\calE$, maps ${X \rightarrow p^! 0}$ are in bijective correspondence with maps ${p_* X \rightarrow 0}$ so there can be at most one.

Let ${\chi : 1 \rightarrow \Omega}$ be the unique map in $\calE$ such that the diagram on the left below 
$$\xymatrix{
 p^! 0 \ar[d] \ar[r] & 1 \ar[d]^-{\top} && 
0 =  p_* (p^! 0) \ar[d] \ar[r] & p_* 1 \ar[d]^-{p_* \top}  \\
1 \ar[r]_-{\chi} & \Omega &&
  p_* 1 \ar[r]_-{p_* \chi} & p_* \Omega
}$$
is a pullback. As ${p_*}$ preserves pullbacks, the diagram on the right above is also a pullback (in $\calS$). Similarly, the square below
$$\xymatrix{
0 = p_* 0 \ar[d] \ar[r] & p_* 1 \ar[d]^-{p_* \top} \\
p_* 1 \ar[r]_-{p_* \bot} & p_* \Omega
}$$
is a pullback in $\calS$. As ${p_* \Omega}$ is a subobject classifier of $\calS$, we can deduce that ${p_* \chi = p_* \bot :  p_* 1 \rightarrow p_* \Omega}$. Lemma~\ref{LemForNewDenseness} implies that ${\chi = \bot : 1 \rightarrow \Omega}$ and so, ${p^! 0}$ is initial.

Finally, consider the statement that for  $\calS$  Boolean and ${p : \calE \rightarrow \calS}$ local, ${p^! 0 = 0}$ implies $p$ hyperconnected. This is just Lemma~{4.2} in \cite{LawvereMenni2015} which, although stated differently, proves exactly this.
\end{proof}

Let us stress that, as witnessed by some Grothendieck toposes of monoid actions, $p$ hyperconnected does not imply that $p_*$ reflects initial object.
For a concrete example, consider the topos of actions of the additive monoid of natural numbers. 

Altogether,  with the Nullstellensatz in mind, one is led to the consideration of hyperconnected geometric morphisms ${p: \calE \rightarrow \calS}$ such that $p_*$ reflects $0$.

\section{Hyperconnected morphisms with Boolean codomain}

The purpose of this section is to show that if ${\Un(\calE) \rightarrow \calE}$ has a right adjoint ${p_* : \calE \rightarrow \Un(\calE)}$ as in Lemma~\ref{LemHyperConnectedIffQuotients} then, this right adjoint is a UI for the subcategories ${\Un(\calE) \rightarrow \calE}$ and ${\calE_{\neg\neg}\rightarrow  \calE}$ if and only if  $p_*$ reflects initial object.

(For convenience we state the following result with the notation for geometric morphisms that we use in the application, but the reader will immediately notice that it is a general simple fact about adjunctions.)

\begin{lemma}\label{LemPreambleToConnectedImpliesLocal}
Let ${p^* : \calS \rightarrow \calE}$ be a coreflective subcategory with right adjoint $p_*$ and  counit ${\beta : p^* p_* \rightarrow Id_{\calE}}$,   and let  
${f^* \dashv f_* : \calF \rightarrow \calE}$ be a reflective subcategory with unit ${\eta : Id_{\calE} \rightarrow f_* f^*}$. 
If the natural transformations ${f^* \beta_{f_*} : f^* p^* p_* f_* \rightarrow f^* f_*}$ and ${p_* \eta_{p^*} : p_* p^* \rightarrow p_* f_* f^* p^*}$ are isos then  the composite adjunction ${f^* p^* \dashv p_* f_* : \calF \rightarrow \calS}$ is an adjoint equivalence.
\end{lemma}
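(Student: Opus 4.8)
The plan is to write down the unit and counit of the composite adjunction $f^* p^* \dashv p_* f_*$ explicitly and observe that each factors as a composite of morphisms that are already known to be invertible.

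First I would fix notation. Let $\eta^p : Id_{\calS} \rightarrow p_* p^*$ be the unit of ${p^* \dashv p_*}$; it is invertible because $p^*$ is the fully faithful inclusion of a coreflective subcategory. Let $\epsilon^f : f^* f_* \rightarrow Id_{\calF}$ be the counit of ${f^* \dashv f_*}$; it is invertible because $f_*$ is the fully faithful inclusion of a reflective subcategory. By the standard description of a composite of adjunctions, the unit of ${f^* p^* \dashv p_* f_*}$ at an object $S$ of $\calS$ is the composite $p_*(\eta_{p^* S}) \circ \eta^p_S : S \rightarrow p_* p^* S \rightarrow p_* f_* f^* p^* S$, and the counit at an object $F$ of $\calF$ is the composite $\epsilon^f_F \circ f^*(\beta_{f_* F}) : f^* p^* p_* f_* F \rightarrow f^* f_* F \rightarrow F$.

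Then the conclusion follows at once. In the unit, $\eta^p_S$ is an iso by full faithfulness of $p^*$, and $p_*(\eta_{p^* S}) = (p_* \eta_{p^*})_S$ is an iso by hypothesis, so the unit of the composite adjunction is an iso. Dually, in the counit, $\epsilon^f_F$ is an iso by full faithfulness of $f_*$, and $f^*(\beta_{f_* F}) = (f^* \beta_{f_*})_F$ is an iso by hypothesis, so the counit of the composite adjunction is an iso. An adjunction whose unit and counit are both invertible is an adjoint equivalence, which is precisely the claim.

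I do not expect a genuine obstacle here: the statement is a formal manipulation, essentially the two-out-of-three property of isomorphisms applied along the triangle identities. The only point needing care is bookkeeping — whiskering the components of the composite unit and counit in the correct order, and keeping straight that it is exactly $p^*$ (on the left) and $f_*$ (on the right) that are fully faithful, so that $\eta^p$ and $\epsilon^f$ supply the automatically invertible halves while the two hypothesised natural transformations supply precisely the complementary halves. Informally, one is composing two adjunctions, each of which is `half an adjoint equivalence' in complementary directions, and the outcome is a full adjoint equivalence.
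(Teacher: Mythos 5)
Your proof is correct and follows essentially the same route as the paper: write the unit and counit of the composite adjunction as composites of two maps each, with one factor invertible by full faithfulness of $p^*$ (resp.\ $f_*$) and the other invertible by hypothesis. No gaps.
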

\begin{proof}
Let ${\epsilon}$ be the counit of ${f^* \dashv f_*}$ and ${\alpha}$ be the unit of ${p^* \dashv p_*}$. Both $\epsilon$ and $\alpha$ are isomorphisms by hypothesis.
It is well-known that the composite adjunction ${f^* p^* \dashv p_* f_* : \calF \rightarrow \calS}$ has the following unit and counit
$$\xymatrix{
1_{\calS} \ar[r]^-{\alpha} & p_* p^* \ar[r]^-{p_* \eta_{p^*}} & p_*f_* f^* p^* &
f^* p^* p_* f_* \ar[r]^-{f^* \beta_{f_*}} & f^* f_* \ar[r]^-{\epsilon} & 1_{\calF}
}$$
and, clearly, the four maps above are isomorphisms by hypothesis. Therefore, both the unit and counit of the composite adjunction are isomorphisms. That is, ${f^* p^* \dashv p_* f_* : \calF \rightarrow \calS}$ is an adjoint equivalence.
\end{proof}

The following is also a simple general fact about adjunctions but let us formulate it in terms of toposes.

\begin{lemma}\label{LemConnectedImpliesLocal} Let ${p : \calE \rightarrow \calS}$ be a connected geometric  morphism with counit $\beta$ and let ${f : \calF \rightarrow \calE}$ be a subtopos with unit $\eta$. If the natural ${f^* \beta : f^* p^* p_* \rightarrow f^*}$ and ${p_* \eta_{p^*} : p_* p^* \rightarrow p_* f_* f^* p^*}$ are isomorphisms then $p_*$ has a right adjoint ${p^!}$ and the subtopos ${p_* \dashv p^! : \calS \rightarrow \calE}$ coincides with $f$. (In other words, $p$ is local and its center coincides with $f$.)
\end{lemma}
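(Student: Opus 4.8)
The statement to prove is Lemma~\ref{LemConnectedImpliesLocal}: from a connected geometric morphism $p : \calE \rightarrow \calS$ and a subtopos $f : \calF \rightarrow \calE$, together with the hypotheses that $f^*\beta$ and $p_*\eta_{p^*}$ are isomorphisms, one concludes that $p$ is local with center $f$. My plan is to reduce this to Lemma~\ref{LemPreambleToConnectedImpliesLocal} together with the standard characterization of local geometric morphisms as those whose direct image has a fully faithful right adjoint. The key observation is that the two displayed hypotheses are exactly the hypotheses of Lemma~\ref{LemPreambleToConnectedImpliesLocal} (with $p^* : \calS \rightarrow \calE$ the coreflective subcategory, since $p$ connected means $p^*$ fully faithful, and with $\calF \rightarrow \calE$ the reflective subcategory $f^* \dashv f_*$), so that lemma gives an adjoint equivalence $f^* p^* \dashv p_* f_* : \calF \rightarrow \calS$.

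First I would spell out the adjoint chains in play: $p$ connected gives $p^* \dashv p_*$ with $p^*$ fully faithful, i.e.\ $p^* : \calS \rightarrow \calE$ is a coreflective subcategory with counit $\beta$; and $f : \calF \rightarrow \calE$ being a subtopos gives $f^* \dashv f_*$ with $f_*$ fully faithful, i.e.\ $\calF \rightarrow \calE$ (via $f_*$) is a reflective subcategory with unit $\eta : Id_{\calE} \rightarrow f_* f^*$. Then I would invoke Lemma~\ref{LemPreambleToConnectedImpliesLocal} verbatim to obtain that $f^* p^* \dashv p_* f_* : \calF \rightarrow \calS$ is an adjoint equivalence; in particular $p_* f_*$ is part of an equivalence, hence fully faithful, and it is right adjoint to $f^* p^*$.

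Next I would build the right adjoint to $p_*$. Since $f^* \dashv f_*$ and $f^* p^* \dashv p_* f_*$ with the latter an equivalence, the composite $p^* \cong f_*(f^* p^*)$ ... more usefully, I would define $p^! := f_*\,(f^* p^*)^{-1}\cdots$; concretely, let $g : \calS \rightarrow \calF$ be a pseudo-inverse of the equivalence $p_* f_*$, so $g \cong f^* p^*$ and set $p^! := f_* g$. Then $p_* \dashv p^!$ follows by composing the adjunction $p_* \dashv ?$: one checks $p_*$ is right adjoint to $f^* p^*$? No — rather, from $p_* f_*$ an equivalence with inverse $g$, and $f^* \dashv f_*$, one gets $p_* \cong (p_* f_*) f^* \simeq$ has right adjoint $f_* \circ (p_* f_*)^{-1} = f_* g = p^!$. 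Since $f_*$ is fully faithful and $g$ is (part of) an equivalence, $p^! = f_* g$ is fully faithful, so $p$ is local. Finally, to identify the center: the subtopos $p_* \dashv p^! : \calS \rightarrow \calE$ has inclusion $p^! = f_* g$ with essential image equal to the essential image of $f_*$ (since $g$ is an equivalence onto $\calF$), which is exactly the subtopos $f$. I would make the last point precise by noting that two reflective subcategories with the same class of objects (equivalently, the same idempotent monad $f_* f^*$ up to iso) coincide, and that $p^! p_* = f_* g p_* \cong f_* g (p_* f_*) f^* \cong f_* f^*$ since $g(p_* f_*) \cong Id_{\calF}$.

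\textbf{Main obstacle.} The genuinely delicate point is the coherence bookkeeping: verifying that the pseudo-inverse $g$ of the equivalence $p_* f_*$ can be chosen so that $f_* g$ is literally right adjoint to $p_*$ (not merely that such a right adjoint exists and is fully faithful), and then matching the resulting subtopos with $f$ on the nose rather than up to an unspecified equivalence. This is entirely formal — it is the standard fact that composing an adjunction with an adjoint equivalence yields an adjunction, plus the observation that the reflective/coreflective subcategory determined by an adjunction depends only on the adjunction up to this kind of replacement — but it requires care to state without hand-waving. I expect the cleanest route is: (i) cite Lemma~\ref{LemPreambleToConnectedImpliesLocal} for the equivalence; (ii) observe $p_* = (p_* f_*)\circ f^*$ exhibits $p_*$ as a composite of a functor with a fully faithful right adjoint ($f_* $, since $f^*\dashv f_*$) followed by an equivalence, hence $p_*$ has a fully faithful right adjoint, namely the composite of the inverse equivalence with $f_*$; (iii) compute the induced idempotent monad on $\calE$ and see it is $f_* f^*$, so the center is $f$.
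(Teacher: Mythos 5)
Your overall architecture matches the paper's: verify the hypotheses of Lemma~\ref{LemPreambleToConnectedImpliesLocal} (noting that $f^*\beta$ invertible implies $f^*\beta_{f_*}$ invertible by whiskering), obtain the adjoint equivalence ${f^* p^* \dashv p_* f_* : \calF \rightarrow \calS}$, and take $p^!$ to be, up to isomorphism, the composite $f_* f^* p^*$, after which the identification of the center with $f$ is immediate because $f^* p^*$ is an equivalence onto $\calF$ and so $p^!$ and $f_*$ have the same essential image. However, the step on which your construction of the adjunction ${p_* \dashv p^!}$ actually rests --- the isomorphism ${p_* \cong (p_* f_*)\circ f^* = p_*\,(f_* f^*)}$ --- is asserted without justification, and it is not among the hypotheses: it says that $p_*\eta_X$ is invertible for \emph{every} object $X$ of $\calE$, whereas the lemma only assumes invertibility of $p_*\eta_{p^* B}$, i.e.\ on objects in the image of $p^*$. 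Since $f_* f^*$ is not the identity for a proper subtopos, this is a genuine claim; without it, your argument only exhibits $f_* g$ as a right adjoint to $(p_* f_*)\circ f^*$, not to $p_*$.

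The gap is fillable from the stated hypotheses: apply $p_*$ to the naturality square of $\eta$ at ${\beta_X : p^*(p_* X) \rightarrow X}$; then $p_*\eta_{p^*(p_* X)}$ is invertible by hypothesis, $p_*(f_* (f^*\beta_X))$ is invertible because $f^*\beta_X$ is, and $p_*\beta_X$ is invertible by a triangle identity together with connectedness (the unit of ${p^*\dashv p_*}$ is an isomorphism); hence $p_*\eta_X$ is invertible. Alternatively --- and this is in effect what the paper does --- you can bypass the issue by checking the adjunction directly on hom-sets: ${\calE(X, f_*(f^*(p^* A))) \cong \calF(f^* X, f^*(p^* A)) \cong \calF(f^*(p^*(p_* X)), f^*(p^* A)) \cong \calS(p_* X, A)}$, where the middle isomorphism uses the hypothesis ${f^* \cong f^* p^* p_*}$ and the last uses full faithfulness of the equivalence $f^* p^*$. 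Either repair completes your proof; as written, the hole sits at exactly the point you flag as ``the genuinely delicate point'' but then wave off as entirely formal.
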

\begin{proof}
The hypothesis that ${f^* \beta : f^* p^* p_* \rightarrow f^*}$ is an iso means that the following diagram
$$\xymatrix{
 & \ar@(l,u)[ld]_-{p_*} \calE \ar@(r,u)[rd]^-{f^*} & \\ 
\calS  \ar[r]_-{p^*} & \calE \ar[r]_-{f^*}  & \calF
}$$
commutes up to (that) canonical isomorphism. The same hypothesis trivially implies  that ${f^* \beta_{f_*} : f^* p^* p_* f_* \rightarrow f^* f_*}$ is an iso. Then Lemma~\ref{LemPreambleToConnectedImpliesLocal} is applicable and so the bottom composite in the diagram above is an equivalence. Therefore, the composite 
$$\xymatrix{
\calS \ar[r]^-{p^*} & \calE \ar[r]^-{f^*} & \calF \ar[r]^-{f_*} & \calE
}$$
is a right adjoint $p^!$ to ${p_* : \calE \rightarrow \calS}$. Clearly,  ${p^! : \calS \rightarrow \calE}$ and ${f_* : \calF \rightarrow \calE}$ are equivalent over $\calE$. 
\end{proof}

From now on let ${p : \calE \rightarrow \calS}$ be a geometric morphism.
Recall that ${\tau : p_* \Omega_{\calE}  \rightarrow \Omega_{\calS}}$ is the unique map such that the following square
$$\xymatrix{
p_* \top \ar[d]_-{p_* \top} \ar[r]^-{!} & 1 \ar[d]^-{\top}  \\
p_* \Omega_{\calE} \ar[r]_-{\tau} & \Omega_{\calS}
}$$
is a pullback in $\calS$.  Since both ${p_* \Omega_{\calS}}$ and ${\Omega_{\calS}}$ are canonically equipped with a Heyting algebra structure it is natural to ask how much of that structure is preserved by $\tau$. We will not address the full question here, but only the fragment we need which, incidentally, is probably known, although we have not found it in the literature.

\begin{lemma}\label{LemHyperconDirectImagePreservesNegation} If ${p_* : \calE \rightarrow \calS}$ preserves $0$ then the diagram on the left below commutes in $\calS$
$$\xymatrix{
p_* 1 \ar[d]_-{p_* \bot} \ar[r]^-{!} & 1 \ar[d]^-{\bot} &&   
   p_* \Omega_{\calE} \ar[d]_-{p_* \neg} \ar[r]^-{\tau} & \Omega_{\calS} \ar[d]^-{\neg} \\
p_* \Omega_{\calE} \ar[r]_-{\tau} & \Omega_{\calS} && 
   p_* \Omega_{\calE} \ar[r]_-{\tau} & \Omega_{\calS}
}$$
so,  if ${p : \calE \rightarrow \calS}$ is hyperconnected then the diagram on the right above commutes.
Hence, in this case, ${p_* : \calE \rightarrow \calS}$ preserves Heyting complements of subobjects.
\end{lemma}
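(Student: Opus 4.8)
The plan is to prove both commutativities by comparing classifying maps: two morphisms into $\Omega_{\calS}$ are equal as soon as they classify the same subobject, so throughout I would compute, for each composite along the boundary of the relevant square, which subobject it classifies, repeatedly using that $p_*$ preserves finite limits --- in particular pullbacks, monomorphisms, and the terminal object --- together with the defining pullback square for $\tau$. The one fact I would recall first is the standard one that in any topos $\neg : \Omega \to \Omega$ is the classifying map of the monomorphism $\bot : 1 \rightarrowtail \Omega$; equivalently, for any $f : A \to \Omega$ the subobject classified by $\neg \circ f$ is the pullback of $\bot : 1 \rightarrowtail \Omega$ along $f$, and the Heyting complement of a subobject with classifying map $\chi$ is classified by $\neg \circ \chi$.

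For the left-hand square I would argue as follows. The composite $\tau \circ (p_*\bot)$ classifies the pullback of $\top : 1 \to \Omega_{\calS}$ along it; pasting, one first pulls $\top$ back along $\tau$, which by the definition of $\tau$ yields $p_*\top : p_* 1 \rightarrowtail p_*\Omega_{\calE}$, and then pulls this back along $p_*\bot$. Since $p_*$ preserves pullbacks and monomorphisms, and since in $\calE$ the pullback of $\top$ along $\bot$ is the subobject $0 \rightarrowtail 1$ (the one $\bot$ classifies by definition), the result is $p_* 0 \rightarrowtail p_* 1$; the hypothesis $p_* 0 = 0$ turns this into the bottom subobject of $p_* 1$, which is classified by $\bot \circ {!}$. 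Hence $\tau \circ (p_*\bot) = \bot \circ {!}$, as claimed.

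For the right-hand square I would add the hypothesis that $p$ is hyperconnected, so that $\tau$ is an isomorphism. On one side, $\tau \circ (p_*\neg)$ classifies the subobject obtained by pulling $\top : 1 \to \Omega_{\calS}$ back along $\tau$ (giving $p_*\top$) and then along $p_*\neg$; applying $p_*$ to the pullback square that witnesses ``$\neg_{\calE}$ classifies $\bot_{\calE}$'', this subobject is exactly $p_*\bot : p_* 1 \rightarrowtail p_*\Omega_{\calE}$. On the other side, $\neg_{\calS} \circ \tau$ classifies the pullback of $\bot_{\calS} : 1 \rightarrowtail \Omega_{\calS}$ along $\tau$, namely $\tau^{-1} \circ \bot_{\calS} : 1 \rightarrowtail p_*\Omega_{\calE}$. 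But the left-hand square, already proved, says $\tau \circ (p_*\bot) = \bot_{\calS} \circ {!}$, and since ${!} : p_* 1 \to 1$ is an isomorphism this exhibits $p_*\bot$ and $\tau^{-1}\circ\bot_{\calS}$ as the same subobject of $p_*\Omega_{\calE}$. Thus the two composites classify the same subobject, so they coincide.

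Finally, preservation of Heyting complements is then formal: for a subobject $m : A \rightarrowtail X$ in $\calE$, applying $p_*$ to the pullback defining $\chi_m$ and pasting with the $\tau$-pullback shows that $p_*m$ is classified by $\tau \circ (p_*\chi_m)$; hence $p_*(\neg m)$ is classified by $\tau \circ p_*(\neg_{\calE} \circ \chi_m) = (\neg_{\calS} \circ \tau) \circ (p_*\chi_m) = \neg_{\calS} \circ \chi_{p_* m}$, which is the classifying map of the complement of $p_*m$. The hard part is the right-hand square: one must notice that the correct way to handle $\neg$ is as the classifier of $\bot : 1 \rightarrowtail \Omega$, organize the two successive pullbacks on each side without error, and --- crucially --- realize that the left-hand square is not merely logically prior but is the precise fact that, once $\tau$ is invertible, identifies $p_*\bot$ with $\tau^{-1}(\bot_{\calS})$; the invertibility of $\tau$, i.e.\ hyperconnectedness, genuinely cannot be omitted here.
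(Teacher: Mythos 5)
Your proposal is correct and follows essentially the same route as the paper: both squares are verified by pasting pullback squares and comparing classifying maps, using that $p_*$ preserves pullbacks and $0$, that the defining square of $\tau$ is a pullback, and that hyperconnectedness makes $\tau$ invertible so the already-established left square can be used to identify $p_*\bot$ with the subobject classified by $\neg_{\calS}\circ\tau$. The final step on Heyting complements also matches the paper's computation that $p_*m$ is classified by $\tau\circ(p_*\chi_m)$.
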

\begin{proof}
In the following  diagram
$$\xymatrix{
0 \ar[d]_-{!} \ar[r]^-{!} & p_* 0 \ar[d]_-{p_* !} \ar[r]^-{p_* !} & p_* 1 \ar[d]_-{p_* \top} \ar[r]^-{!} & 1 \ar[d]^-{\top} \\
1 \ar[r]_-{!} & p_* 1 \ar[r]_-{p_* \bot} & p_* \Omega_{\calE} \ar[r]_-{\tau} & \Omega_{\calS}
}$$
all squares are pullbacks. Indeed, the right square is a pullback by definition of $\tau$, the middle one is so because $p_*$ preserves pullbacks, and the left one is a pullback because $p_*$ preserves $0$ by hypothesis.
It follows that the bottom composite equals ${\bot : 1 \rightarrow \Omega_{\calS}}$, which means that the left square in the statement commutes.

If $p$ is hyperconnected then ${p_*}$ preserves $0$ because it is a coreflection. Moreover, in this case, by Proposition~{A4.6.6} in \cite{elephant}, $\tau$ is an isomorphism; so the square that we have just proved commutative is actually a pullback. Therefore, all the squares in the diagrams below are pullbacks
$$\xymatrix{
p_* 1 \ar[d]_-{p_* \bot} \ar[r]^-{!} & 1 \ar[d]^-{\bot}  \ar[r]^-{!} & 1 \ar[d]^-{\top}    &&
   p_* 1 \ar[d]_-{p_* \bot} \ar[r]^-{!} & p_* 1 \ar[d]^-{p_* \top}  \ar[r]^-{!} & 1 \ar[d]^-{\top}   \\
p_* \Omega_{\calE} \ar[r]_-{\tau} & \Omega_{\calS} \ar[r]_-{\neg} &  \Omega_{\calS} &&
p_* \Omega_{\calE} \ar[r]_-{p_* \neg} &  p_* \Omega_{\calE} \ar[r]_-{\tau} & \Omega_{\calS}
}$$
so the bottom composites coincide; that is, the right square in the statement commutes. It is now easy to check that ${p_* : \calE \rightarrow \calS}$ preserves Heyting complements.
First observe that if the square on the left below is a pullback in $\calE$ 
$$\xymatrix{
U \ar[d]_-u \ar[r]^-{!} & 1 \ar[d]^-{\top} && 
   p_* U \ar[d]_-{p_* u} \ar[r]^-{!} & p_* 1 \ar[d]^-{p_* \top} \ar[r]^-{!} & 1 \ar[d]^-{\top} \\
X \ar[r]_-{\chi_u} & \Omega_{\calE} && 
  p_* X \ar[r]_-{p_* \chi_u} & p_* \Omega_{\calE} \ar[r]_-{\tau}  &  \Omega_{\calS} 
}$$
then the rectangle on the right above is also a pullback (in $\calS$), so its bottom composite must be the classifying morphism of the subobject ${p_* u : p_* U \rightarrow p_* X}$. In other words, ${\tau (p_* \chi_u) = \chi_{p_* u} : p_* X \rightarrow \Omega_{\calS}}$.

Finally, the Heyting complement ${\neg u : \neg U \rightarrow X}$ of ${u : U \rightarrow X}$ is classified by the composite on the left below
$$\xymatrix{
X \ar[r]^-{\chi_u} & \Omega_{\calE} \ar[r]^-{\neg} & \Omega_{\calE} & 
  p_* (\neg U) \ar[d]_-{p_* (\neg u)} \ar[rr]  & & p_* 1 \ar[d]^-{p_*\top} \ar[r]^-{!} & 1 \ar[d]^-{\top} \\
 & &  &
  p_* X \ar[r]_-{p_* \chi_u} & p_* \Omega_{\calE} \ar[r]_-{p_* \neg} & p_* \Omega_{\calE} \ar[r]_-{\tau} & \Omega_{\calS} 
}$$
so the pullback diagram on the right above shows that its bottom composite classifies ${p_* (\neg u) : p_*(\neg U) \rightarrow p_* X}$. Since the diagram below commutes
$$\xymatrix{
  p_* X \ar[rd]_-{\chi_{p_* u}} \ar[r]^-{p_* \chi_u} & p_* \Omega_{\calE} \ar[d]_-{\tau} \ar[r]^-{p_* \neg} & p_* \Omega_{\calE} \ar[d]^-{\tau} \\
& \Omega_{\calS} \ar[r]_-{\neg} & \Omega_{\calS} 
}$$
it follows that ${p_* (\neg u)}$ is the same subobject of ${p_* X}$ as ${\neg (p_* u)}$.
\end{proof}

We will need to apply the next observation twice.

\begin{lemma}\label{LemDense} Assume that ${p_* : \calE \rightarrow \calS}$ both preserves and reflects $0$,  and let   ${u :  U \rightarrow X}$ be a monomorphism. If ${p_* u }$ is an isomorphism then ${\neg U}$ is initial and therefore $u$ is $\neg\neg$-dense.
\end{lemma}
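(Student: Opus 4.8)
The plan is to prove that $p_*(\neg U)$ is initial in $\calS$ and then invoke the hypothesis that $p_*$ reflects $0$. The only features of $p_*$ I will use are that it preserves finite limits (so, being a direct image, it carries monomorphisms to monomorphisms and pullbacks to pullbacks, hence intersections of subobjects to intersections of subobjects), together with the two standing hypotheses of the lemma: that $p_*$ preserves $0$ and reflects $0$. Notably, this route does not require the full strength of Lemma~\ref{LemHyperconDirectImagePreservesNegation} (that $p_*$ preserves Heyting complements), nor hyperconnectedness.

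First I would recall the defining disjointness of the Heyting complement: as subobjects of $X$ one has $U \cap \neg U = 0$. Applying $p_*$ and using that it preserves the pullback that computes this intersection, we get $p_* U \cap p_*(\neg U) = p_*(U \cap \neg U)$ as subobjects of $p_* X$; and preservation of $0$ identifies the right-hand side with the least subobject $0 \hookrightarrow p_* X$. So $p_* U$ and $p_*(\neg U)$ are disjoint subobjects of $p_* X$. Now the hypothesis that $p_* u$ is an isomorphism says exactly that $p_* U$ is the top element of $\mathrm{Sub}(p_* X)$, and intersecting with the top element is the identity; hence $p_*(\neg U) = p_* X \cap p_*(\neg U) = 0$. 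Thus $p_*(\neg U)$ is initial in $\calS$, and since $p_*$ reflects $0$, the object $\neg U$ is initial in $\calE$.

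To finish I would translate the conclusion that $\neg U$ is initial into the statement that $u$ is $\neg\neg$-dense. Since every subobject is disjoint from the initial object, $\neg 0 = X$ in $\mathrm{Sub}(X)$, whence $\neg\neg U = \neg(\neg U) = \neg 0 = X$. As the closure of the subobject $U \hookrightarrow X$ for the double-negation topology is precisely $\neg\neg U$, this closure is all of $X$, which is exactly the assertion that $u$ is $\neg\neg$-dense.

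I do not expect a genuine obstacle. The two points that deserve a moment's care are that one really does need preservation of $0$ (not merely of finite limits) in order to push the disjointness $U \cap \neg U = 0$ through $p_*$, and the elementary Heyting-algebra facts — $\neg 0 = 1$, and the identification of the $\neg\neg$-closure of a subobject with its double complement — that underpin the last paragraph; neither is substantive.
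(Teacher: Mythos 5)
Your proof is correct and follows essentially the same route as the paper: apply $p_*$ to the pullback square expressing $U \cap \neg U = 0$, use preservation of pullbacks and of $0$, and the fact that $p_*u$ is an isomorphism to conclude $p_*(\neg U) = 0$, then reflect. The paper leaves the final deduction ($\neg U$ initial $\Rightarrow$ $\neg\neg U = X$ $\Rightarrow$ $u$ is $\neg\neg$-dense) implicit in the word ``therefore''; your last paragraph just spells that out.
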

\begin{proof}
 Let ${\neg u : \neg U \rightarrow X}$ denote the Heyting complement of $u$. Then the diagram on the left below is a pullback
$$\xymatrix{
0 \ar[d] \ar[r] &  \neg U \ar[d]^-{\neg u} && 0 \ar[d] \ar[r] &  p_* (\neg U) \ar[d]^-{p_* (\neg u)} \\
U \ar[r]_-{u} & X && p_* U \ar[r]_-{p_* u} & p_* X
}$$
and, as  $p_*$ preserves $0$ by hypothesis, the diagram on the right above is also a pullback. As ${p_* u}$ is an isomorphism, so is ${0 \rightarrow   p_* (\neg U)}$.
Since $p_*$ reflects $0$ by hypothesis, ${ \neg U }$ is initial.
\end{proof}

The following result is a strengthening of that outlined in \S 2 of \cite{McLarty87} (see also \cite{McLarty88}). 
It is also related to Theorem~{3.4} in \cite{Johnstone2011} in the vague sense that a geometric morphism whose domain is a `topos of spaces' is actually local. Indeed, notice the invocation to Lemma~{3.1} loc.~cit. in the proof below.

\begin{proposition}\label{PropSeparatedUnitDenseCounit}
If $\calS$ is a Boolean topos and ${p: \calE \rightarrow \calS}$ is a hyperconnected geometric morphism then, $p$ is local 
 if and only if the following hold:
\begin{enumerate}
\item the functor ${p_* : \calE \rightarrow \calS}$ reflects $0$ and
\item for every $A$ in $\calS$,  ${p^* A}$ is $\neg\neg$-separated.
\end{enumerate}
In this case, the subtopos ${p_* \dashv p^! : \calS \rightarrow \calE}$ coincides with ${\calE_{\neg\neg} \rightarrow \calE}$.
\end{proposition}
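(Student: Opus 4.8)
The plan is to prove the two directions of the "if and only if" separately, and then derive the identification of the center. For the forward direction, assume $p$ is local. Then $p$ is local, essential? No — not necessarily essential. But we do know $p$ is hyperconnected and local. Since $p$ is hyperconnected, $p_*$ preserves $0$ (it is a coreflection), and by Proposition~\ref{PropNewDenseness} the rightmost adjoint $p^!$ preserves $0$; since $p^!$ is fully faithful, $p_*(p^! 0) = 0$, and I want to conclude $p_*$ reflects $0$. Actually the cleanest route: by Lemma~\ref{LemMarmolejo}-style reasoning adapted to the local case, or directly — if $p_* X$ is initial then, using that $p^! 0 = 0$ (Proposition~\ref{PropNewDenseness}) and that $X \to p^!(p_* X) = p^! 0 = 0$ is a map to the initial object, $X$ is initial. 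This gives item~(1). For item~(2), I must show each $p^* A$ is $\neg\neg$-separated, i.e. the diagonal $p^* A \to p^* A \times p^* A$ is $\neg\neg$-closed. Since $\calS$ is Boolean, $A$ is decidable in $\calS$, so $\Delta_A$ is complemented; applying $p^*$ (which preserves finite products and coproducts) the diagonal $\Delta_{p^* A}$ is complemented in $\calE$, hence in particular its double-negation closure is itself, so $p^* A$ is $\neg\neg$-separated. (One should check that a complemented mono is $\neg\neg$-closed, which is immediate from $\neg\neg u = u$ for complemented $u$.)

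For the converse, assume items~(1) and~(2); I want to produce a right adjoint to $p_*$ and identify the resulting subtopos with $\calE_{\neg\neg}$. The natural candidate for the "center" is $f \colon \calE_{\neg\neg} \hookrightarrow \calE$, with associated sheafification $f^* \dashv f_*$. I would invoke Lemma~\ref{LemConnectedImpliesLocal} with this $f$: it suffices to check that $f^*\beta \colon f^* p^* p_* \to f^*$ and $p_* \eta_{p^*} \colon p_* p^* \to p_* f_* f^* p^*$ are isomorphisms, where $\beta$ is the counit of $p$ and $\eta$ is the unit of sheafification. For $p_*\eta_{p^*}$: the component $\eta_{p^* A} \colon p^* A \to f_* f^*(p^* A)$ is the $\neg\neg$-sheafification unit, which is always a $\neg\neg$-dense mono composed with... actually it is the unit of reflection, and for a $\neg\neg$-separated object it is a $\neg\neg$-dense monomorphism. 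By item~(2), $p^* A$ is separated, so $\eta_{p^* A}$ is a dense mono $u$; by item~(1), $p_*$ preserves and reflects $0$, so I cannot directly conclude $p_* u$ is iso from Lemma~\ref{LemDense} (that lemma goes the other way). Instead: a $\neg\neg$-dense mono $u \colon U \to X$ has $\neg U$ initial, hence $p_*(\neg U)$ initial; and $X \setminus U$ sits inside $\neg U$... I need $p_* u$ epic. This is where I expect the main work: showing that $p_*$ inverts $\neg\neg$-dense monos between the relevant objects. The key input should be that $p_*$ preserves Heyting complements (Lemma~\ref{LemHyperconDirectImagePreservesNegation}, available since $p$ hyperconnected), so $p_*(\neg u) = \neg(p_* u)$; if $u$ is dense then $\neg u$ is the complement of a dense mono, which is $0 \to X$, so $\neg(p_* u) = p_*(\neg u) = p_*(0 \to X) = (0 \to p_* X)$, i.e. $p_* u$ is $\neg\neg$-dense in $\calS$; but $\calS$ is Boolean, so dense monos are isos, whence $p_* u$ is an isomorphism. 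That settles $p_*\eta_{p^*}$. For $f^*\beta$: I must show that sheafifying the counit $\beta_X \colon p^* p_* X \to X$ gives an iso, i.e. $\beta_X$ becomes invertible in $\calE_{\neg\neg}$; equivalently $\beta_X$ is both $\neg\neg$-dense (as a "cover" after image factorization) — using that $\beta_X$ is monic since $p$ is hyperconnected — and $\neg\neg$-closed on the nose, or more simply that its image is $\neg\neg$-dense and $p^* p_* X$ is separated. Since $\beta_X$ is a mono, I need it to be $\neg\neg$-dense: its complement $\neg\beta_X$ should be initial. This should follow from Lemma~\ref{LemDense} applied to $\beta_X$ — if $p_*\beta_X$ were an iso we'd be done; and indeed $p_*\beta_X \colon p_* p^* p_* X \to p_* X$ is an iso because $p_* p^* \cong \mathrm{Id}$ ($p$ is connected) and by the triangle identities.

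Finally, with Lemma~\ref{LemConnectedImpliesLocal} applied, $p$ is local and its center $p^! \dashv$ ... rather, the subtopos $p_* \dashv p^!$ coincides with $f \colon \calE_{\neg\neg} \to \calE$, which is exactly the last sentence of the statement. The main obstacle, as indicated, is verifying $f^*\beta$ and $p_*\eta_{p^*}$ are isomorphisms — concretely, proving that $p_*$ carries $\neg\neg$-dense monomorphisms (into the objects of interest) to isomorphisms, for which the essential ingredients are: $p$ hyperconnected $\Rightarrow$ $p_*$ preserves Heyting complements (Lemma~\ref{LemHyperconDirectImagePreservesNegation}) and $p_*$ preserves/reflects $0$ (item~(1) and hyperconnectedness), combined with Booleanness of $\calS$ to kill the resulting dense mono downstairs. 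I would double-check the separatedness hypothesis (item~(2)) is used exactly to guarantee $p^* A$-shaped objects, and $p^* p_* X$, are separated so that the sheafification units there are honest dense monomorphisms rather than more complicated reflection maps.
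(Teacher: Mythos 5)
Your proof is correct and follows essentially the same route as the paper's: Lemma~\ref{LemConnectedImpliesLocal} applied to the $\neg\neg$-subtopos, with $f^*\beta$ handled via Lemma~\ref{LemDense} (using that $p_*\beta$ is invertible by connectedness and the triangle identities) and $p_*\eta_{p^*}$ handled via preservation of Heyting complements (Lemma~\ref{LemHyperconDirectImagePreservesNegation}) plus Booleanness of $\calS$. The only divergence is in the forward direction of item~(2), where you observe directly that $p^*A$ is decidable, hence $\neg\neg$-separated, because $\calS$ is Boolean --- whereas the paper derives separatedness from Johnstone's characterization of hyperconnectedness via the monic $p^*\rightarrow p^!$; your shortcut is exactly the ``decidable $\Rightarrow$ $\neg\neg$-separated'' fact the paper itself invokes in Corollary~\ref{CorMotivatedByMcLarty}, and it shows item~(2) is automatic under the standing hypotheses.
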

\begin{proof}
Assume first that  $p$ is local and hyperconnected. Then ${p^! : \calS \rightarrow \calE}$ preserves $0$ by Proposition~\ref{PropNewDenseness}. By generalities about reflective subcategories and strict initial objects, $p_*$ reflects initial object if and only if $p^!$ preserves it. So it remains to show that ${p^* A}$ is $\neg\neg$-separated.
For this, notice that the subtopos ${p_* \dashv p^! : \calS \rightarrow \calE}$ is a Boolean dense subtopos and so it must coincide with the subtopos of sheaves for the double negation topology. 
Moreover, if we let ${\eta}$ be the unit of ${p_* \dashv p^!}$ then, by Lemma~{3.1} in \cite{Johnstone2011}, $p$ is hyperconnected if and only if  the canonical natural transformation ${p^* \rightarrow p^!}$ is monic; but this is equivalent to  ${\eta_{p^*} : p^* A \rightarrow p^!  (p_* (p^* A))}$ being monic for every $A$ in $\calS$. In turn, this is equivalent to  ${p^* A}$ being separated (w.r.t. ${p_* \dashv p^!}$) for every $A$ in $\calS$ (see Lemma~{A4.3.6} in \cite{elephant}).

For the converse we need to assume that the two items in the statement hold and prove that $p_*$ has a right adjoint. The strategy of the proof is to apply Lemma~\ref{LemConnectedImpliesLocal}, so consider the subtopos ${f : \calE_{\neg\neg} \rightarrow \calE}$ of sheaves for the $\neg\neg$-topology. Let $\beta$ be the (monic) counit of ${p^* \dashv p_*}$ and ${\eta}$ be the unit of ${f^* \dashv f_*}$.

By Lemma~\ref{LemDense}, the counit ${\beta_X : p^* (p_*  X) \rightarrow X}$ is $\neg\neg$-dense for every $X$ in $\calE$ and, hence, ${f^* \beta}$ is an isomorphism.

For the same reasons mentioned in the first paragraph of the proof,  ${p^* A}$ is $\neg\neg$-separated if and only if  ${\eta_{p^*} : p^* A \rightarrow f_* (f^* (p^* A))}$ is monic.  Since ${f^* : \calE \rightarrow \calE_{\neg\neg}}$ reflects $0$ and ${f^* \eta}$ is an isomorphism, Lemma~\ref{LemDense} implies that ${\neg\eta_{p^*} : \neg (p^* A ) \rightarrow f_* (f^* (p^* A))}$ is the initial subobject.
Lemma~\ref{LemHyperconDirectImagePreservesNegation} then implies that 
$${\neg(p_* \eta_{p^*}) = p_* (\neg \eta_{p^*}) = p_* 0 = 0}$$
 as subobjects of ${p_* (f_* (f^* (p^* A)))}$.
Since the topos ${\calS}$ is Boolean, $${p_* \eta_{p^*} : p_* (p^* A)  \rightarrow p_* (f_* (f^* (p^* A)))}$$ is an isomorphism.
So we can apply Lemma~\ref{LemConnectedImpliesLocal} to complete the proof.
\end{proof}

To summarize, we state another sufficient condition for the existence of a Unity and Identity for decidable objects and $\neg\neg$-sheaves. (Again, compare with \cite{McLarty87}.)

\begin{corollary}\label{CorMotivatedByMcLarty} Let $\calE$ be a topos and assume that the inclusion ${\Un(\calE) \rightarrow \calE}$ has a right adjoint ${p_*}$.  Then the resulting  hyperconnected  ${p : \calE \rightarrow \Un(\calE)}$  is local if and only if  $p_*$  reflects 0. 
In this case, the subtopos ${p_* \dashv p^! :\Un(\calE) \rightarrow \calE}$ coincides with ${\calE_{\neg\neg} \rightarrow \calE}$. 
Also in this case, $p$ is pre-cohesive if and only if ${p^* : \Un(\calE) \rightarrow \calE}$ is cartesian closed.
\end{corollary}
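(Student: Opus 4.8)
The plan is to reduce everything to results already established in the excerpt, since Corollary~\ref{CorMotivatedByMcLarty} is essentially a packaging of Lemma~\ref{LemHyperConnectedIffQuotients}, Proposition~\ref{PropSeparatedUnitDenseCounit} and Lemma~\ref{LemConnectedPressential}, specialised to the case where $\calS = \Un(\calE)$ and $p^*$ is the inclusion of decidable objects. First I would invoke Lemma~\ref{LemHyperConnectedIffQuotients}: since ${\Un(\calE) \to \calE}$ has a right adjoint $p_*$, the category $\Un(\calE)$ is a topos and $p_*$ is the direct image of a hyperconnected geometric morphism ${p : \calE \to \Un(\calE)}$. So we are exactly in the situation of Proposition~\ref{PropSeparatedUnitDenseCounit} with codomain topos $\calS = \Un(\calE)$, \emph{provided} that this codomain is Boolean.

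The one genuine point to check is therefore that $\Un(\calE)$ is a Boolean topos. This should follow from the fact that $\Un(\calE)$ is, by definition, a category all of whose objects are decidable, together with the observation that in a topos every subobject ${U \rightarrowtail X}$ sits in the decidability coproduct ${X \cong U + \neg U}$ precisely when $U$ is complemented; so if every object of $\Un(\calE)$ is decidable, every mono in $\Un(\calE)$ is complemented (one must be slightly careful that complements computed in $\Un(\calE)$ agree with those in $\calE$, but $\Un(\calE) \to \calE$ preserves finite limits and finite coproducts and is closed under subobjects, so this is immediate). A topos in which every subobject is complemented is Boolean. I would state this as a short preliminary observation — it is close to folklore and is implicitly used in the surrounding literature (e.g. \cite{Carboni96}).

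With $\Un(\calE)$ known to be Boolean and $p$ hyperconnected, Proposition~\ref{PropSeparatedUnitDenseCounit} applies verbatim and gives: $p$ is local if and only if $p_*$ reflects $0$ \emph{and} $p^* A$ is $\neg\neg$-separated for every $A$ in $\Un(\calE)$. But here $p^*$ is the inclusion of decidable objects, and every decidable object of $\calE$ is $\neg\neg$-separated (a decidable object $X$ has complemented diagonal ${\Delta : X \rightarrowtail X \times X}$, hence $\Delta$ is its own Heyting double-negation closure and $X$ is separated); so the second condition is automatic and drops out. That yields the first assertion, ``$p$ local iff $p_*$ reflects $0$''. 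The identification of the center ${p_* \dashv p^! : \Un(\calE) \to \calE}$ with ${\calE_{\neg\neg} \to \calE}$ is then exactly the last sentence of Proposition~\ref{PropSeparatedUnitDenseCounit}.

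Finally, for the pre-cohesive clause: a geometric morphism is pre-cohesive iff it is local, hyperconnected and pressential, i.e. essential with finite-product-preserving left adjoint (with $p^*$ full, faithful and closed under subobjects, which already holds here). We have just arranged $p$ local and hyperconnected, so $p$ is pre-cohesive iff it is pressential. Now if $p$ is pressential then, since $p$ is also connected (being hyperconnected), Lemma~\ref{LemConnectedPressential} forces ${p^* : \Un(\calE) \to \calE}$ to be cartesian closed. Conversely, if $p^*$ is cartesian closed then, being connected and... here the subtle point is that Lemma~\ref{LemConnectedPressential} presupposes pressentiality, so for the converse I would instead argue directly: $p^*$ cartesian closed together with $p$ local forces $p$ to be locally connected (this is a standard consequence, essentially A1.5.9 / the stable-units story of Lemma~\ref{LemDefStablyLocc} read for $B = 1$ applied to a local morphism whose inverse image is cartesian closed), and locally connected implies essential; preservation of finite products of $p_!$ again comes from $p^*$ cartesian closed via Lemma~\ref{LemConnectedPressential} once essentialness is in hand. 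The step I expect to be the main obstacle is precisely this converse direction of the pre-cohesive clause — pinning down the cleanest cited route from ``$p^*$ cartesian closed and $p$ local'' to ``$p$ pressential'' without circularity; everything else is a direct appeal to the already-proven Proposition~\ref{PropSeparatedUnitDenseCounit} plus the Booleanness of $\Un(\calE)$.
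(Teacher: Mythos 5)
Your proof of the first two assertions follows the paper's route exactly: Lemma~\ref{LemHyperConnectedIffQuotients} produces the hyperconnected $p$, and then Proposition~\ref{PropSeparatedUnitDenseCounit} applies, with its separatedness condition discharged by the fact that decidable objects are $\neg\neg$-separated. Your explicit check that $\Un(\calE)$ is Boolean addresses a point the paper leaves tacit and is worth making, although the justification you sketch is slightly off: it is not true that every mono whose codomain is decidable is complemented (subobjects of decidable objects are decidable, not complemented). The correct folklore argument is that $\Omega_{\Un(\calE)} = p_* \Omega_{\calE}$ is itself an object of $\Un(\calE)$, hence decidable, and a topos whose subobject classifier is decidable is Boolean ($\top$ is a pullback of the complemented diagonal of $\Omega$, and every mono is a pullback of $\top$).

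The genuine gap is in the last clause. The paper does not derive ``$p$ local, hyperconnected, and $p^*$ cartesian closed implies $p$ pressential'' from the material in this article; it cites Corollary~3.11 of \cite{Menni2017a}, whose content is precisely the construction of the left adjoint $p_!$ (the $\pi_0$ construction) under these hypotheses. Your proposed substitute --- that ``$p^*$ cartesian closed and $p$ local'' forces local connectedness as a standard consequence of A1.5.9 or of Lemma~\ref{LemDefStablyLocc} --- does not work as stated: A1.5.9 and Lemma~\ref{LemConnectedPressential} presuppose that the left adjoint already exists, and Lemma~\ref{LemDefStablyLocc} characterises stably locally connected morphisms among those already known to be connected \emph{essential}. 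Nothing in the excerpt manufactures $p_!$ out of cartesian closedness of $p^*$; that existence statement is exactly the non-trivial content of the cited corollary, and the clean way to close your argument is to invoke it, as the paper does. You correctly identified this as the main obstacle; the resolution is a citation rather than the route you sketched.
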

\begin{proof}
By  Lemma~\ref{LemHyperConnectedIffQuotients}, the existence of $p_*$ determines the hyperconnected $p$. Proposition~\ref{PropSeparatedUnitDenseCounit}, together with the well-known fact that decidable objects are $\neg\neg$-separated, implies that $p$ is local if and only if $p_*$ reflects $0$. By Corollary~{3.11} in \cite{Menni2017a}, the local $p$ is pre-cohesive if and only if $p^*$ is cartesian closed.
\end{proof}

In analogy with  \cite{LawvereMenni2015}, one interpretation of the results above involves the conclusion that the real contrast between {\em Mengen} and {\em Kardinalen} emerges from the case of a topos $\calE$ whose subcategory ${\Un(\calE) \rightarrow \calE}$ of decidable objects has additional remarkable properties. In particular, that it has a $0$-reflecting right adjoint.

\section*{Acknowledgments} I thank F.~W.~Lawvere and F.~Marmolejo for several useful discussions. I also thank C.~McLarty who sent me copies of his papers \cite{McLarty87, McLarty88}. Much of the work was done during a visit to the {\em Universit\`a di Bologna} in 2017, with the support of C.~Smith and funding from the European Union's Horizon 2020 research and innovation programme under the Marie Sk\l odowska-Curie grant agreement No.~{690974}.

%\bibliography{../../../../../biblio}
%\bibliographystyle{asl}

\end{document}